\theoremstyle{plain}
\newtheorem{thm}{Theorem}[section]
\newtheorem{lem}[thm]{Lemma}
\newtheorem{defin}[thm]{Definition}
\theoremstyle{definition}
\newtheorem{rmk}[thm]{Remark}
\def\enne{\mathbb{N}}
\def\zeta{\mathbb{Z}}
\def\erre{\mathbb{R}}
\def\P{\mathbb{P}}
\def\E{\mathop{{}\mathbb{E}}}
\def\cL{\mathscr{L}}
\def\cF{\mathscr{F}}
\def\cB{\mathscr{B}}
\def\eps{\varepsilon}
\def\beq{\begin{equation}}
\def\eeq{\end{equation}}
\def\to{\rightarrow}
\def\wto{\rightharpoonup}
\def\wstarto{\stackrel{*}{\rightharpoonup}}
\def\embed{\hookrightarrow}
\def\cembed{\stackrel{c}{\hookrightarrow}}
\def\norm #1{\left\|#1\right\|}
\def\abs #1{\left|#1\right|}
\def\sp #1#2{\left<#1,#2\right>}
\title{\huge\rm On the stochastic Cahn-Hilliard equation\\
with a singular double-well potential\footnote{{\bf Acknowledgments.}
		The author is very grateful to Carlo Marinelli for his expert support and 
		fundamental advice which led to a better presentation of these results.}
		\\[.5cm]}
\author{{\large\sc Luca Scarpa}\\
		{\small Department of Mathematics, University College London}\\
		{\small Gower Street, London WC1E 6BT, United Kingdom}\\
		{\small E-mail: \texttt{luca.scarpa.15@ucl.ac.uk}}
		}
\date{}
\begin{document}

\maketitle  

\begin{abstract}
  We prove well-posedness and regularity
  for the stochastic pure Cahn-Hilliard equation
  under homogeneous Neumann boundary conditions, with both additive and multiplicative Wiener noise. 
  In contrast with great part of the literature, the double-well potential is treated as generally 
  as possible, its convex part being associated to 
  a multivalued maximal monotone graph everywhere defined 
  on the real line on which no growth nor smoothness assumptions are assumed.
  The regularity result allows to give appropriate sense
  to the chemical potential and to write a natural variational formulation of the problem.
  The proofs are based on suitable monotonicity and compactness arguments
  in a generalized variational framework.
  \\[.5cm]
  {\bf AMS Subject Classification:} 35K25, 35R60, 60H15, 80A22.\\[.5cm]
  {\bf Key words and phrases:} Stochastic Cahn-Hilliard equation, singular potential,
  well-posedness, regularity, variational approach.
\end{abstract}

\pagestyle{myheadings}
\newcommand\testopari{\sc Luca Scarpa}
\newcommand\testodispari{\sc On the stochastic Cahn-Hilliard equation with a singular potential}
\markboth{\testodispari}{\testopari}

%%%%%%%%%%%%%%%%%%%%%%%%%%%%%%%%%%%%%%%%%%%%%%%%

\thispagestyle{empty}

\section{Introduction}
\setcounter{equation}{0}
\label{intro}
The well-known Cahn-Hilliard equation was first introduced in \cite{cahn-hill} to describe the evolution 
of the phase separation phenomenon involving a binary metallic alloy:
in general, in a solid-solid phase separation each phase concentrates,
and this results in what is usually referred to as {\em spinodal decomposition}.

The classical Cahn-Hilliard equation
reads
\[
  \partial_t u - \Delta w =0\,, \quad w\in-\Delta u + \beta(u) + \pi(u) + g \qquad\text{in } (0,T)\times D\,,
\]
where $D\subseteq\erre^N$ ($N=2,3$) is a smooth bounded domain with smooth boundary $\Gamma$,
$T>0$ is a fixed finite time, $\Delta$ stands for the Laplacian acting on the space variables and 
$g$ is a given source.
The unknown
$u$ and $w$ represent the order parameter and the chemical potential, respectively. 
Here, $\beta$ is the subdifferential of the convex part $j$ and $\pi$ is the derivative of 
the concave perturbation $\widehat\pi$ of a so-called double-well potential $\psi:=j+\widehat\pi$.
Typical examples of $\psi$ (see also \cite{col-gil-spr}) are given by
\begin{gather*}
\psi_{reg}(r)=\frac14(r^2-1)^2\,, \quad r\in\erre\,,\\
\psi_{log}(r)=\left((1+r)\ln(1+r) - (1-r)\ln(1-r)\right)-cr^2\,, \quad r\in(-1,1)\,, \quad c>0\,,\\
\psi_{2obst}(r)=\begin{cases}
  c(1-r^2) \quad&\text{if } |r|\leq1\,,\\
  +\infty \quad&\text{if } |r|>1\,,
\end{cases}
\end{gather*}
which correspond to a regular, logarithmic and non-smooth double-well potential, respectively
(the last one is usually considered in the so-called double-obstacle problem).
In the simplest case, the equation is coupled with homogeneous Neumann boundary conditions
for both $u$ and $w$,
and a given initial datum:
\[
  \partial_{\bf n}u = \partial_{\bf n}w = 0 \quad\text{on } (0,T)\times\Gamma\,,
  \qquad u(0)=u_0 \quad\text{in } D\,,
\]
where the symbol ${\bf n}$ stands for the outward normal unit vector on $\Gamma$.
It is well-known that the homogeneous Neumann condition for the chemical potential 
ensures the conservation of the mean-value of $u$ on $D$, as it is easily proved
integrating the first equation on $D$.
It is also noteworthy that the term $w-g$ associated to the chemical potential
results from the subdifferentiation of the Ginzburg-Landau
free energy functional
\[
  \mathcal E(x):=\frac12\int_D|\nabla x|^2 + \int_D\left(j(x)+\widehat\pi(x)\right)\,.
\]

From a mathematical perspective, deterministic Cahn-Hilliard equations have 
received much attention in the last years and have been analytically 
investigated also in more general frameworks, such as the viscous case and
under the so-called dynamic boundary conditions.
A special mention goes to the contributions 
\cite{col-fuk-eqCH, col-gil-spr, cal-colli, cher-gat-mir, cher-mir-zel, cher-pet, colli-fuk-CHmass,
gil-mir-sch} dealing with 
global well-posedness and regularity for Cahn-Hilliard and Allen-Cahn type
equations with singular potentials, and \cite{col-fuk-diffusion, col-scar, gil-mir-sch-longtime}
regarding
asymptotics and long-time behaviour of solutions. Also, we point out the papers
\cite{col-far-hass-gil-spr, col-gil-spr-contr, col-gil-spr-contr2, hinter-weg} concerning optimal 
control problems related to Cahn-Hilliard systems.

While the deterministic Cahn-Hilliard equation provides a good description of the spinodal
decomposition process, on the other hand it it is not effective in taking into account
the effects due to the random solute vibrational movements.
These can be accounted for directly by adding a cylindrical Wiener process $W$
in the equation itself, hence getting a stochastic partial differential equation of the form
\[
  du(t) - \Delta w(t)\,dt = B(t,u(t))\,dW_t\,, \quad w\in-\Delta u + \beta(u) + \pi(u) + g
  \qquad\text{in } (0,T)\times D\,,
\]
with homogeneous Neumann
conditions for $u$ and $w$, and a given initial value $u_0$,
where $B$ is a suitable stochastically integrable operator.

The available literature on
stochastic Cahn-Hilliard equations is not as extended as the corresponding deterministic one
and is mainly focused
on the classical case of a smooth polynomial double-well potential.
Let us point out the contribution \cite{daprato-deb}, in which the authors prove 
existence and regularity of solutions, as well as existence and uniqueness of
an invariant measure for the transition semigroup, in the case of a polynomial double-well 
potential of even degree $2p$. Moreover,
in the case of the regular double-well potential $\psi_{reg}$,
existence, uniqueness and
regularity of weak statistical solution and existence of a strong solution are proved
in \cite{elez-mike} for the equation in local form and in \cite{corn} for a nonlocal version.
Finally, let us also mention the contributions \cite{ant-kar-mill} on a stochastic Cahn-Hilliard 
equation with unbounded noise and \cite{deb-zamb, deb-goud, goud} dealing
with stochastic Cahn-Hilliard equations with reflections.

The noteworthy feature of this work is that neither growth nor smoothness assumptions on $\beta$
are required, provided that $\beta$ is everywhere defined: consequently, 
in contrast with great part of the existing literature, we are able to handle 
any double-well potential $\psi:\erre\to\erre$, not necessarily smooth, with any arbitrary
order of growth
(e.g.~also super-exponential).
Note that the requirement $D(\beta)=\erre$ seems
to be fundamental in the stochastic setting, but is not necessary in the deterministic case.
The second
evident difficulty in the stochastic case is that the conservation of the mean value
of the order parameter $u$ is no longer true, unless the covariance operator $B$ takes values in 
the subspace of null-mean elements of $L^2(D)$: although this hypothesis is usually required in the literature,
we show that in the case of additive noise it can be avoided, provided that a stronger condition on the 
moments of $B$ and $u_0$ holds.
The high singularity of $\beta$
and the lack of growth assumptions prevent us to rely on the classical
semigroup or variational approaches.
For this reason, we establish global well-posedness and regularity
in a suitable generalized variational setting:
appropriate regularized equations are solved in the classical variational framework, then
uniform estimates are found and a passage to the limit
provides solutions to the original problem. These arguments 
have been also employed in \cite{mar-scar-diss, mar-scar-div, mar-scar-div2, scar-div,
mar-scar-ref, mar-scar-erg, mar-scar-note}
for dissipative and divergence-form SPDEs with singular drift, and in \cite{orr-scar}
for singular stochastic Allen-Cahn equations with dynamic boundary conditions.
The main idea is to
rely on $L^1$-estimates and weak compactness criterions in $L^1$-spaces, which were first 
employed in the works \cite{barbu-semilin, barbu-daprato-rock} for semilinear and porous-media 
equations.

Let us briefly present the structure of the paper.
We consider the stochastic Cahn-Hilliard system with homogeneous Neumann boundary conditions in the form
\begin{align}
  \label{eq1}
  du(t) - \Delta w(t)\,dt = B(t, u(t))\,dW_t \qquad&\text{in } (0,T)\times D\,,\\
  \label{eq2}
  w \in -\Delta u + \beta(u) + \pi(u) + g \qquad&\text{in } (0,T)\times D\\
  \label{eq3}
  \partial_{\bf n} u=0\,, \quad \partial_{\bf n}w=0 \qquad&\text{in } (0,T)\times\Gamma\,,\\
  \label{eq4}
  u(0)=u_0 \qquad&\text{in } D\,,
\end{align}
where $W$ is a cylindrical Wiener process on a certain Hilbert space $U$, $B$ takes values in the space
of Hilbert-Schmidt operators from $U$ to $L^2(D)$, $\beta$ is a maximal monotone graph everywhere defined on $\erre$,
$\pi$ is a Lipschitz function and $g$ and $u_0$ are given data.

In Section~\ref{main_results}, we fix the main assumptions that will be in order throughout the work, 
we precise the concept of strong solution that will be used and we state the main results of the paper.

In Section~\ref{additive}, we prove well-posedness for the problem with additive noise.
As we have anticipated, the proof of existence is carried out considering suitable regularized equations, in which $\beta$
is replaced by its Yosida approximation $\beta_\lambda$ and the operator $B$ is 
smoothed out through a suitable power of the resolvent of the laplacian $(I-\eps\Delta)^{-k}$.
Uniform estimates (both pathwise and in expectation) 
on the solutions to the regularized equation are proved and global solutions to the problem are obtained by
passing to the limit as $\lambda, \eps \searrow0$
in suitable topologies through compactness and monotonicity arguments. 
More specifically, we pass to the limit pathwise on each trajectory as $\lambda\searrow0$, 
with $\eps>0$ being fixed, and then we let $\eps\searrow0$ 
using suitable convergences in expectation, removing thus the regularization on the noise.
The continuous dependence property follows directly from a generalized It\^o's formula
and monotonicity.

Section~\ref{mult} contains the proof of well-posedness with multiplicative noise.
This is performed in a classical way, using a fixed-point argument on sufficiently small
subintervals of $[0,T]$ and by a standard patching technique.

Finally, in Section~\ref{reg} we show that if the data satisfy additional assumptions, then 
the solutions to the problem with additive noise inherit a further regularity.
In particular, the chemical potential is found to be $H^1$-valued, so that 
a natural variational formulation of the problem can be written.
The proof consists in showing a further class of uniform estimates on the approximated solutions
involving a regularized version of the Ginzburg-Landau free-energy functional.

%%%%%%%%%%%%%%%%%%%%%%%%%%%%%%%%%%%%%%%%%%%%%%%%

\section{Assumptions and main results}
\setcounter{equation}{0}
\label{main_results}

\subsection{Notation, setting and assumptions}
Throughout the paper, $(\Omega,\cF, \P)$ is a probability space endowed with a
filtration $(\cF_t)_{t\in[0,T]}$ which is saturated and right-continuous, where $T>0$ is a fixed final time.
Moreover,  $D\subseteq\erre^N$ is a smooth bounded domain with smooth boundary $\Gamma$
($N=2,3$).

For any Banach space $E$, we use the symbols $L^p(0,T; E)$ and $L^p(\Omega; E)$ to indicate the classes
of strongly measurable $p$-Bochner integrable functions $(0,T)\to E$ and $\Omega\to E$, respectively. We 
recall that if $E$ is separable then strong and weak measurability coincide and we will drop the qualifier 
"strong" in such a case. We say that a $E$-valued process
is measurable if it is jointly measurable from $\cF\otimes\cB([0,T])$ to $\cB(E)$.
For any two Hilbert spaces $U_1$ and $U_2$, we use the symbols $\cL(U_1, U_2)$, $\cL_1(U_1, U_2)$ and
$\cL_2(U_1, U_2)$ for the spaces of linear, trace class and Hilbert-Schmidt operators from $U_1$ to $U_2$,
respectively.

We introduce the spaces
\[
  H:=L^2(D)\,, \qquad V_s:=
  \begin{cases}
  H^s(D) \qquad&\text{if } s\in[1,2)\,,\\
  \left\{v \in H^s(D): \;\partial_{\bf n}v=0 \;\text{ on } \Gamma\right\}\quad&\text{if } s\geq 2\,.
  \end{cases}
\]
Duality pairings, scalar products and norms are denoted by the symbols
$\sp\cdot\cdot$, 
$(\cdot,\cdot)$ and $\norm{\cdot}$, respectively, with a subscript specifying the spaces in consideration.
Let us point out that thanks to the classical results on elliptic regularity, 
a possible norm on $V_2$, equivalent to the usual one, is
\[
  \norm{v}_2:=\sqrt{\norm{v}_H^2 + \norm{\Delta v}_H^2}\,, \quad v\in V_2\,.
\]
We shall use the notation
\[
  y_D:=\frac1{|D|}\sp{y}1_{V_1} \qquad\forall\,y\in V_1^*
\]
for the mean operator in $V_1^*$ and recall that a possible norm on $V_1$, equivalent to the usual one, is given by
\[
  \norm{v}_1:=\sqrt{|v_D|^2 + \norm{\nabla v}^2_H}\,, \qquad v\in V_1\,.
\]
We introduce
\[
  H_0:=\left\{v \in H:\; v_D=0\right\}\,,
\]
which is a Hilbert space with the scalar product of $H$. 

\begin{rmk}
Let us point out that in the sequel we will use the following classical result:
for any $x\in V_1$ and $f:\erre\to\erre$ Lipschitz continuous, then $f(x)\in V_1$ and
\[
  \nabla f(x) = f'(x)\nabla x \quad\text{a.e.~in } D\,.
\]
Since $f'$ is defined only almost everywhere on $\erre$, the relation above 
could make no sense (e.g.~if $x$ takes values, with positive measure on $D$, 
in a set on which $f'$ is not well-defined). It has to be 
implicitly intended that the quantities above are $0$ in such a case.
\end{rmk}

It is natural when dealing with Cahn-Hilliard equations to define the operator $\mathcal N$ as
\[
  \mathcal N:D(\mathcal N)\to V_1\,, \qquad D(\mathcal N):=\left\{y\in V_1^*:\; y_D=0\right\}\,,
\]
where $\mathcal Ny$ is the unique solution with null mean to the generalized Neumann problem,
\[
  \int_D\nabla\mathcal Ny \cdot\nabla\varphi=\sp{y}{\varphi}_{V_1} \quad\forall\,\varphi\in V_1\,, \qquad (\mathcal Ny)_D=0\,.
\]
Let us recall some important properties of $\mathcal N$ that will be used in the sequel: the reader can refer here
to \cite[pp.~979-980]{col-gil-spr}. First of all, $\mathcal N$ is an 
isomorphism between $D(\mathcal N)$ and $\{v\in V_1: v_D=0\}$ satisfying
\beq
  \label{N1}
  \sp{y_1}{\mathcal Ny_2}_{V_1}=\sp{y_2}{\mathcal Ny_1}_{V_1}=\int_D\nabla\mathcal Ny_1\cdot\nabla\mathcal Ny_2 
  \qquad\forall\,y_1,y_2\in D(\mathcal N)\,.
\eeq
Secondly, the operator
\[
\norm{\cdot}_*:V_1^*\to[0,+\infty)\,, \qquad \norm y_*^2:=\norm{\nabla\mathcal N(y-y_D)}_H^2 + |y_D|^2\,, \quad y\in V_1^*
\]
defines a norm on $V_1^*$, equivalent to the usual dual norm, such that:
\beq
  \label{N2}
  \norm v_H^2 \leq \eps\norm{\nabla v}_H^2 + C_\eps\norm v_*^2 \qquad\forall\,v\in V_1\,,
\eeq
for every $\eps>0$ and a positive constant $C_\eps$.
Finally, we also have that
\beq
  \label{N3}
  \sp{\partial_t v(t)}{\mathcal Nv(t)}_{V_1}=\frac12\frac{d}{dt}\norm{\nabla\mathcal Nv(t)}_H^2 \quad\text{for a.e.~}t\in(0,T)
\eeq
for every $v\in H^1(0,T; V_1^*)$ such that $v_D=0$ almost everywhere in $(0,T)$.

Let us fix now the main assumptions of the work.

First of all, we assume that $\beta:\erre\to2^{\erre}$ is a maximal monotone operator with effective domain
$D(\beta)=\erre$, $j:\erre\to[0,+\infty)$ is the proper, convex and lower semicontinuous function
such that $j(0)=0$ and $j^*$ is its convex conjugate. We make a symmetry hypothesis on $j$ of the type
\[
\limsup_{|x|\to+\infty}\frac{j(x)}{j(-x)}<+\infty\,,
\]
which is automatically satisfied if $j$ is even for example. Furthermore, $\pi:\erre\to\erre$ is a Lipschitz-continuous
function with Lipschitz constant $C_\pi>0$, and we set $C_0:=|\pi(0)|$.\\
The Yosida approximation of $\beta$ and the Moreau regularization of $j$ are defined as
\[
  \beta_\lambda:=\frac{I-(I+\lambda\beta)^{-1}}{\lambda}\,, \qquad
  j_\lambda(x):=\inf_{y\in\erre}\left\{\frac{|x-y|^2}{2\lambda}+j(y)\right\}\,, \quad x\in\erre\,,
\]
for every $\lambda>0$: see \cite[Chapter~2]{barbu-monot} for further properties.

Secondly, we assume that $g$ and the initial datum $u_0$ satisfy
\begin{gather}
  \label{g}
  g\in L^2(\Omega; L^2(0,T; H)) \qquad\text{progressively measurable}\,\\
  \label{u0}
  u_0 \in L^2(\Omega,\cF_0,\P; H)\,, \qquad j(\alpha(u_0)_D) \in L^1(\Omega) \quad\forall\,\alpha>0\,.
\end{gather}

Finally, as far as the noise is concerned, 
we suppose that $U$ is a Hilbert space and $W$ is a cylindrical Wiener process on $U$. 
The hypotheses on $B$ are slightly different
depending on whether we are considering additive or multiplicative noise, and will be recalled explicitly 
in the main results of the work.
In case of additive noise, we assume
\begin{gather}
\label{add1}
  B \in L^2(\Omega; L^2(0,T; \cL_2(U,H)))\quad\text{progressively measurable}\,,\\
\label{add2}
  j(\alpha(B\cdot W)_D) \in L^1(\Omega\times(0,T)) \qquad\forall\,\alpha>0\,.
\end{gather}
In case of multiplicative noise, we assume
\beq\label{mult1}
  B:\Omega\times[0,T]\times V_1\to\cL_2(U,H_0) \quad\text{progressively measurable}
\eeq
and that there exist a constant $C_B>0$ and a process $f\in L^2(\Omega\times(0,T))$
such that
\begin{align}
  \label{mult2}
  \norm{B(\omega,t,x_1)-B(\omega, t, x_2)}_{\cL_2(U,V_1^*)} \leq C_B\norm{x_1-x_2}_{V_1^*}& \qquad\forall\, x_1,x_2 \in V_1\,,\\
  \label{mult3}
  \norm{B(\omega,t,x)}_{\cL_2(U,H)}\leq |f(\omega,t)| + C_B\norm{x}_{V_1}& \qquad\forall\,x\in H
\end{align}
for every $(\omega,t)\in \Omega\times[0,T]$.
\begin{rmk}
Let us focus on the additive noise case and
comment more specifically on hypotheses \eqref{u0} and \eqref{add2}, giving some
sufficient conditions on $B$ and $u_0$ for these ones to hold.
First of all, \eqref{u0} is the the classical integrability assumption on the initial datum:
the requirement involving the parameter $\alpha$ is essential when dealing with 
singular potentials (e.g.~when $j$ grows faster than a polynomial). It is not difficult to check that 
this is satisfied when $(u_0)_D$ is bounded $\P$-almost surely, or when 
$j(\alpha u_0)\in L^1(\Omega\times D)$ for every $\alpha>0$, for example.
Secondly, \eqref{add2} is an existence assumption on certain moments of $B$
associated to the potential $j$: again, when $j$ is singular, 
the introduction of the parameter $\alpha$ is essential.
If $B$ takes values in $\cL_2(U,H_0)$, then $(B\cdot W)_D=0$ and
condition \eqref{add2} is automatically satisfied, for example.
Otherwise, if $j$ has polynomial growth, 
for example if $j$ is bounded from above by a polynomial of order $p\geq2$, then,
by the Jensen and Burkholder-Davis-Gundy inequalities, 
it is not difficult to check that a sufficient condition for \eqref{u0} and \eqref{add2} to hold is that
\[
  u_0\in L^p(\Omega; L^p(D))\,, \qquad B \in L^p(\Omega; L^2(0,T; \cL_2(U,H)))\,,
\]
which is essentially a stronger existence condition on the moments of $u_0$ and $B$. 
\end{rmk}
\begin{rmk}\label{rmk_additive}
  Let us compare the assumptions on $B$ in case of additive and multiplicative noise. 
  First of all, note that the fact that $B$ takes values in $\cL_2(U,H_0)$
  clearly implies condition \eqref{add2}, so that the hypotheses in case of multiplicative noise are more stringent.
  Secondly, in case 
  of multiplicative noise, the hypothesis that $B$ takes values in 
  the space $\cL_2(U,H_0)$ implies that the stochastic integral $B\cdot W$ has null mean at any time, and
  hence ensures the conservation of the mean of the solution $u$ to the system: this is a quite natural 
  assumption for stochastic Cahn-Hilliard systems and has been widely employed in the common literature
  (see e.g.~\cite{daprato-deb, elez-mike, goud}).
  However, it is very interesting to
  note that in case of additive noise, the conservation of the mean on $D$ of the stochastic component is not needed
  in order to have existence of solutions for the equation, provided at least that $B$ is regular enough so that
  \eqref{add2} is satisfied.
\end{rmk}
\begin{rmk}
  Note that the assumptions on $u_0$ and $B$ can be reformulated using the notation of 
  Orlicz spaces. In particular, if for any measure space $(E,\mathscr{E},\mu)$
  $M_j(E)$ and $L_j(E)$ denote the strong and weak Orlicz spaces associated to $j$ on $E$, respectively, i.e.
  \begin{gather*}
    M_j(E):=\left\{u:E\to\erre\quad \mathscr E\text{-measurable}: \quad j(\alpha u)\in L^1(E, \mu) \quad
    \forall\,\alpha>0\right\}\,,\\
    L_j(E):=\left\{u:E\to\erre \quad\mathscr E\text{-measurable}:\quad
    \exists\,\alpha>0 :\quad j(\alpha u)\in L^1(E,\mu)\right\}\,,
  \end{gather*}
  then conditions \eqref{u0} and \eqref{add2} can be reformulated as
  \[
    (u_0)_D \in M_j(\Omega)\,, \qquad (B\cdot W)_D \in M_j(\Omega\times(0,T))\,.
  \]
  If $j$ is a polynomial, then
  $M_j=L_j$, and 
   dependence on $\alpha$ can be passed-by as we have already seen in Remark~\ref{rmk_additive}.
  The relevance of Orlicz spaces is not surprising in this context: indeed, they play an important role in the study of 
  stochastic evolution equations from a variational approach (for further detail see 
  \cite[\S~4.3--4.4]{barbu-daprato-rock-book}).
\end{rmk}

\subsection{Concept of solution and main results}
We now give the definition of solution for problem \eqref{eq1}--\eqref{eq4} that will be used throughout the paper
and we state the main results.

First of all, we need to identify a possible weak variational formulation for the system \eqref{eq1}--\eqref{eq4}: the idea is to 
(formally) substitute equation \eqref{eq2} into \eqref{eq1} in order to get an evolution equation of monotone type just in
terms of the variable $u$. 

Let us proceed in a formal way for the moment. Assume that $(u,w)$ is a reasonable solution to our problem,
which means (in an appropriate sense) that
\[
  du(t) - \Delta w\,dt = B(t,u(t))\,dW_t\,, \qquad w=-\Delta u + \xi + \pi(u)+ g\,,
\]
where $\xi \in \beta(u)$ almost everywhere, $\partial_{\bf n}u=\partial_{\bf n}w=0$ and $u(0)=u_0$.
Substituting the second equation in the first one we can write the system only in terms of $u$ as
\[
  du(t) - \Delta\left(-\Delta u + \xi + \pi(u)+ g\right)\,dt = B(t,u(t))\,dW_t\,.
\]
Let us focus now on the operator acting on $u$ in this last equation. Note that for every test function $\varphi \in V_2$,
recalling that
$\partial_{\bf n}(-\Delta u + \xi + \pi(u)+ g)=0$ by
 the boundary condition for $w$
and integrating by parts, we have (again formally)
\[\begin{split}
  \int_D- \Delta\left(-\Delta u + \xi + \pi(u)+ g\right)\varphi&=
  -\int_D\left(-\Delta u + \xi + \pi(u)+ g\right)\Delta\varphi\\
  &=\int_D\Delta u\Delta\varphi - \int_D\xi\Delta\varphi - \int_D\pi(u)\Delta\varphi - \int_Dg\Delta\varphi\,.
\end{split}\]

Bearing in mind these formal considerations, we can now proceed in a rigorous way. First of all, we introduce the operators
\begin{gather*}
  -\Delta: V_1\to V_1^*\,, \qquad \sp{-\Delta x}\varphi_{V_1}:=\int_D\nabla x\cdot\nabla\varphi\,, \quad x,\varphi\in V_1\,,\\
  \Delta^2:V_2\to V_2^*\,, \qquad \sp{\Delta^2 x}\varphi_{V_2}:=\int_D\Delta x\Delta\varphi\,, \quad x,\varphi\in V_2\,.
\end{gather*}
Secondly, we use the same notation $-\Delta$ to indicate the natural extension of the operator $-\Delta$
defined above acting
on the whole space $H$, i.e.
\[
  -\Delta: H \to V_2^*\,, \qquad \sp{-\Delta x}\varphi_{V_2}:=-\int_Dx\Delta\varphi\,, \quad x\in H\,, \quad \varphi\in V_2\,.
\]
Moreover,
since we cannot expect $\xi$ to be an $H$-valued process, but just $L^1(D)$-valued
(this is quite common for stochastic equations with singular drift, as you can see in the works 
\cite{mar-scar-diss, mar-scar-div, mar-scar-div2, scar-div, orr-scar}),
we need to extend the operator $-\Delta$ to $L^1(D)$. To this end,
we define the Banach space
\[
\bar V_2:=\{x\in V_2: \Delta x\in L^\infty(D)\}
\]
with its natural norm
and note that the operator $-\Delta$
can be extended again (with the same notation) to
\[
  -\Delta: L^1(D)\to \bar V_2^*\,.
\]
With this notation, the formal computations that we have made above can be written as
\[
\int_D- \Delta\left(-\Delta u + \xi + \pi(u)+ g\right)\varphi=
\sp{\Delta^2 u}\varphi_{V_2} + \sp{-\Delta(\xi+\pi(u) + g)}\varphi_{\bar V_2} \qquad\forall\,\varphi\in \bar V_2\,.
\]

We are now ready to give the definition of strong solution to the problem.
\begin{defin}[Strong solution]
  A strong solution to the problem \eqref{eq1}--\eqref{eq4} is a pair $(u,\xi)$, where 
  $u$ is an $H$-valued predictable process, $\xi$ is an $L^1(D)$-valued predictable process, and such that
  \begin{gather*}
    u \in L^2(\Omega; C^0([0,T]; V_1^*))\cap L^2(\Omega; L^\infty(0,T; H))\cap L^2(\Omega; L^2(0,T; V_2))\,,\\
    \xi \in L^1(\Omega; L^1(0,T; L^1(D)))\,,\\
    j(u) + j^*(\xi) \in L^1(\Omega\times(0,T)\times D)\,,\\
    \xi \in \beta(u) \quad\text{a.e.~in } \Omega\times (0,T)\times D
  \end{gather*}
  and
  \[
    u(t) + \int_0^t\Delta^2u(s)\,ds - \int_0^t\Delta\left(\xi(s)+ \pi(u(s)) + g(s)\right)\,ds =
    u_0 + \int_0^tB(s,u(s))\,dW_s
  \]
  for every $t\in[0,T]$, $\P$-almost surely.
\end{defin}
\begin{rmk} 
Note that if $(u,\xi)$ is a strong solution in the sense specified above, then,
setting $w:=-\Delta u + \xi + \pi(u) +g$, we have that
\[
  u(t)-\int_0^t\Delta w(s)\,ds = u_0 + \int_0^tB(s,u(s))\,dW_s \qquad\forall\,t\in[0,T]\,,\quad \P\text{-a.s.}
\]
However, since the regularity of $\xi$, hence of $w$, is just $L^1$, the corresponding variational formulation of the 
problem is given by
\[
  \int_Du(t)\varphi - \int_0^t\!\!\int_Dw(s)\Delta\varphi\,ds = \int_Du_0\varphi + \int_D\!\!\left(\int_0^tB(s,u(s))\,dW_s\right)\varphi
  \qquad\forall\,\varphi\in \bar V_2\,.
\]
In order to get a more natural variational formulation of the problem involving the gradient of $w$, 
we shall need a regularity result on the solutions.
\end{rmk}

We are now ready to state the main results of the paper:
the first ones are an existence and continuous dependence result for problem \eqref{eq1}--\eqref{eq4}
in the case of additive and multiplicative noise.
\begin{thm}[Existence, additive noise]
  \label{thm1}
  Under the assumptions \eqref{g}--\eqref{add2}, there exists a strong solution
  to the system \eqref{eq1}--\eqref{eq4}.
\end{thm}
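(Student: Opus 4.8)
The plan is to follow the regularize--estimate--pass-to-the-limit scheme announced in the introduction, adapted to the $L^1$ setting forced by the absence of growth assumptions on $\beta$. For $\lambda,\eps>0$ I would replace $\beta$ by its Yosida approximation $\beta_\lambda$ and smooth the noise as $B^\eps:=(I-\eps\Delta)^{-k}B$, with $k$ large enough that $B^\eps$ takes values in $\cL_2(U,V_1)$, and first solve the regularized equation
\[
  du_\lambda^\eps + \Delta^2 u_\lambda^\eps\,dt - \Delta\bigl(\beta_\lambda(u_\lambda^\eps)+\pi(u_\lambda^\eps)+g\bigr)\,dt = B^\eps\,dW\,.
\]
Since $\beta_\lambda$ is Lipschitz, the drift is coercive and weakly monotone on the Gelfand triple $V_2\embed H\embed V_2^*$ --- indeed $\sp{\Delta^2 u}{u}_{V_2}=\norm{\Delta u}_H^2$ controls $\norm{u}_{V_2}^2$ up to $\norm{u}_H^2$, while the $\beta_\lambda$ and $\pi$ terms are treated as Lipschitz perturbations through the interpolation $\norm{\nabla v}_H^2\le\delta\norm{\Delta v}_H^2+C_\delta\norm{v}_H^2$ --- so the classical variational theory for SPDEs with monotone coefficients yields a unique solution $u_\lambda^\eps$.

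Next I would derive bounds uniform in $\lambda$ and $\eps$. Itô's formula for $\tfrac12\norm{u_\lambda^\eps}_H^2$, using $\sp{\Delta^2 u}{u}_{V_2}=\norm{\Delta u}_H^2$, $\sp{-\Delta\beta_\lambda(u)}{u}_{V_1}=\int_D\beta_\lambda'(u)|\nabla u|^2\ge0$, the Lipschitz bound on $\pi$ and \eqref{g}, \eqref{add1} together with the Burkholder--Davis--Gundy inequality, gives a bound in $L^2(\Omega;L^\infty(0,T;H))\cap L^2(\Omega;L^2(0,T;V_2))$; the resolvent being a contraction, $\norm{B^\eps}_{\cL_2(U,H)}\le\norm{B}_{\cL_2(U,H)}$, so the bound is $\eps$-uniform. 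The delicate step is the $L^1$-control of $\beta_\lambda(u_\lambda^\eps)$ and of $j(u_\lambda^\eps),j^*(\beta_\lambda(u_\lambda^\eps))$. Testing with $\mathcal N(u_\lambda^\eps-(u_\lambda^\eps)_D)$ and using \eqref{N3} produces, after integrating by parts against the chemical potential, the quantity $\int_D\beta_\lambda(u_\lambda^\eps)\,(u_\lambda^\eps-(u_\lambda^\eps)_D)$ alongside the coercive term $\norm{\nabla u_\lambda^\eps}_H^2$; a standard sign inequality then bounds $\norm{\beta_\lambda(u_\lambda^\eps)}_{L^1(D)}$ by this quantity up to lower-order terms, and a Fenchel--Young argument controls $\int_D j(u_\lambda^\eps)+j^*(\beta_\lambda(u_\lambda^\eps))$. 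I expect the genuine obstacle here: because $B$ need not take values in $H_0$, the mean $(u_\lambda^\eps)_D$ is driven by the noise and is no longer conserved, so its control rests precisely on the moment hypotheses \eqref{u0} and \eqref{add2} and on the symmetry assumption on $j$.

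With these estimates I would pass to the limit first as $\lambda\searrow0$ with $\eps$ fixed. Since the noise is additive, $\int_0^\cdot B^\eps\,dW$ is common to all $\lambda$ and the means $(u_\lambda^\eps)_D$ coincide, so the difference of two solutions has zero mean and solves a pathwise deterministic equation; testing it with $\mathcal N(u_\lambda^\eps-u_\mu^\eps)$, using \eqref{N3} and the monotonicity of $\beta_\lambda$, yields a Cauchy estimate of order $\lambda+\mu$ in $C^0([0,T];V_1^*)$ for almost every $\omega$. This gives strong convergence $u_\lambda^\eps\to u^\eps$, upgraded to $L^2(0,T;V_1)$ through the $V_2$-bound. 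The uniform bound on $\int_D j^*(\beta_\lambda(u_\lambda^\eps))$, with $j^*$ superlinear, yields weak $L^1$-relative compactness of $\beta_\lambda(u_\lambda^\eps)$ by the de la Vallée Poussin criterion and the Dunford--Pettis theorem, so $\beta_\lambda(u_\lambda^\eps)\wto\xi^\eps$; the inclusion $\xi^\eps\in\beta(u^\eps)$ then follows from strong--weak convergence together with the weak lower semicontinuity of $(v,\eta)\mapsto\int_D j(v)+j^*(\eta)$ and the Fenchel equality, which force $\int_D j(u^\eps)+j^*(\xi^\eps)=\int_D u^\eps\xi^\eps$ and hence $\xi^\eps\in\beta(u^\eps)$ a.e.

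Finally, to remove the noise regularization I would let $\eps\searrow0$ using convergences in expectation, the driving noise now depending on $\eps$. As $B^\eps\to B$ in $L^2(\Omega;L^2(0,T;\cL_2(U,H)))$, Itô's formula for $\norm{u^\eps-u^{\eps'}}_*^2$, the monotonicity of $\beta$ and the Lipschitz bound on $\pi$, and Gronwall's lemma give $\E\sup_{t\in[0,T]}\norm{u^\eps-u^{\eps'}}_*^2\to0$, hence strong convergence of $u^\eps$ in $L^2(\Omega;C^0([0,T];V_1^*))$; the $L^1$-estimates being $\eps$-uniform as well, they survive the passage and $\xi$ is identified exactly as before. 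The resulting pair $(u,\xi)$ is the desired strong solution. Throughout, I expect the hardest part to be the $\lambda$- and $\eps$-uniform $L^1$-estimates in the presence of the non-conserved, noise-driven mean, which is exactly the point where assumptions \eqref{u0} and \eqref{add2} are indispensable.
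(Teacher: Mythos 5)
Your overall architecture (Yosida approximation in $\lambda$, resolvent smoothing of the noise in $\eps$, estimates in $H$ and via $\mathcal N$, removal of $\lambda$ first and then of $\eps$) matches the paper's. But the step you use to pass to the limit in $\lambda$ does not work in this generality, and it is precisely the point where the paper has to take a different, and considerably longer, route. You claim that testing the difference of two approximated solutions with $\mathcal N(u_\lambda^\eps-u_\mu^\eps)$ and ``using the monotonicity of $\beta_\lambda$'' yields a Cauchy estimate of order $\lambda+\mu$. After integrating by parts this produces the term $\int_D(\beta_\lambda(u_\lambda^\eps)-\beta_\mu(u_\mu^\eps))(u_\lambda^\eps-u_\mu^\eps)$, and the standard decomposition $u_\lambda^\eps=(I+\lambda\beta)^{-1}u_\lambda^\eps+\lambda\beta_\lambda(u_\lambda^\eps)$ leaves, besides the nonnegative part, remainders of the form $\lambda\int_D\beta_\lambda(u_\lambda^\eps)\beta_\mu(u_\mu^\eps)$. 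Controlling these requires square-integrability (or at least integrability of the product) of $\beta_\lambda(u_\lambda^\eps)$, uniformly in $\lambda$. Here $\beta$ carries no growth assumption and the only uniform bounds available are $L^1$-bounds on $\beta_\lambda(u_\lambda^\eps)u_\lambda^\eps$ and on $j^*(\beta_\lambda(u_\lambda^\eps))$; the product of two such terms need not even be integrable. This is exactly why the paper abandons the Cauchy argument in $\lambda$ and instead extracts, for each fixed $\omega$ in a full-measure set, weakly convergent subsequences of $\beta_\lambda(u_\lambda)$ in $L^1((0,T)\times D)$ via de la Vall\'ee Poussin and Dunford--Pettis, combined with Aubin--Lions compactness for $u_\lambda$. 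The price is that the subsequences depend on $\omega$, so the paper must then prove a pathwise uniqueness statement for the limit equation (to upgrade subsequential convergence to convergence of the full sequence) and a separate measurability argument for $u$ and $\xi$ based on Mazur's lemma. None of this appears in your proposal, and without your Cauchy estimate the limit processes you construct have no obvious adaptedness or joint measurability.

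Two further points. First, the pathwise compactness route needs pathwise (not just expected) bounds on $j^*(\beta_\lambda(u_\lambda))$ and on $u_\lambda$ in $L^2(0,T;V_2)$; obtaining these forces terms like $j(c\,B\cdot W)$ and $j(\Delta(B\cdot W))$ to be finite for a.e.\ $\omega$, which in the absence of growth assumptions on $j$ requires $B^\eps$ to take values in a space embedding into $W^{2,\infty}(D)$ (the paper uses $(I-\eps\Delta)^{-2}B\in\cL_2(U,V_4)$ for $N\le3$). Your choice of $k$ ``large enough that $B^\eps$ takes values in $\cL_2(U,V_1)$'' is too weak for this. Second, your $\eps$-limit via an It\^o/Gronwall estimate for $\norm{u^\eps-u^{\eps'}}_*^2$ is essentially the paper's continuous-dependence argument and is sound, but note that since the means $(u^\eps)_D$ and $(u^{\eps'})_D$ differ (they are driven by $((B^\eps-B^{\eps'})\cdot W)_D$), one must first check that the difference has the mean structure required to apply $\mathcal N$; the paper handles this through the hypothesis of Theorem~\ref{thm2} and the extra estimate on $\norm{u^\eps-u^\delta}_{L^2(\Omega;L^2(0,T;H))}$ via \eqref{N2}. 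In summary: the skeleton is right, but the central $\lambda$-limit as you propose it fails without growth assumptions on $\beta$, and the compensating machinery (pathwise weak $L^1$ compactness, pathwise uniqueness, measurability) is missing.
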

\begin{thm}[Continuous dependence, additive noise]
  \label{thm2}
  Let the data $(u_0^i, g_i, B_i)$ satisfy the assumptions \eqref{g}--\eqref{add2}, for $i=1,2$, such that
  \[
  (u_0^1)_D+(B_1\cdot W)_D = (u_0^2)_D + (B_2\cdot W)_D \qquad\text{a.e.~in } \Omega\times(0,T)\,,
  \]
  and let $(u_i,\xi_i)$ be any respective strong
  solution to \eqref{eq1}--\eqref{eq4}, for $i=1,2$. Then, 
  \[\begin{split}
  &\norm{u_1-u_2}_{L^2(\Omega; C^0([0,T]; V_1^*))} + \norm{\nabla(u_1-u_2)}_{L^2(\Omega; L^2(0,T; H))}\\
  &\lesssim \norm{u_0^1-u_0^2}_{L^2(\Omega; V_1^*)} + \norm{g_1-g_2}_{L^2(\Omega; L^2(0,T; V_1^*))}
  +\norm{B_1-B_2}_{L^2(\Omega; L^2(0,T; \cL_2(U,V_1^*)))}\,.
  \end{split}\]
  In particular, if $(u_0^1, g_1, B_1)=(u_0^2,g_2,B_2)$ then $u_1=u_2$ and $\xi_1-(\xi_1)_D=\xi_2-(\xi_2)_D$;
  moreover, if $\beta$ is also single-valued, then $\xi_1=\xi_2$ as well.
\end{thm}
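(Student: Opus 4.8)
The plan is to derive the estimate via a generalized It\^o formula applied to the squared dual norm of the difference of the two solutions, exploiting the monotonicity of $\beta$ and the Lipschitz continuity of $\pi$. Set $u:=u_1-u_2$, $\xi:=\xi_1-\xi_2$, $g:=g_1-g_2$ and $B:=B_1-B_2$; subtracting the two strong formulations gives, for every $t$,
\[
u(t) + \int_0^t\Delta^2u(s)\,ds - \int_0^t\Delta\bigl(\xi(s)+\pi(u_1(s))-\pi(u_2(s))+g(s)\bigr)\,ds = u(0) + \int_0^tB(s)\,dW_s.
\]
The first observation is that the mean of $u$ vanishes identically: applying $(\cdot)_D$ kills the two divergence-form integrals by the Neumann structure, so $u(t)_D=u(0)_D+(B\cdot W)(t)_D$, and the coupling hypothesis on the data, together with the continuity and null initial value of $t\mapsto(B_i\cdot W)(t)_D$, forces $u(0)_D=0$ and $(B\cdot W)(t)_D=0$ for a.e.~$t$. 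Hence $u(t)\in D(\mathcal N)$ and $B$ takes values in $\cL_2(U,H_0)$, which is precisely what makes $\mathcal N$ applicable to every term.

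Next I would apply the generalized It\^o formula to $t\mapsto\frac12\norm{u(t)}_*^2=\frac12\norm{\nabla\mathcal N u(t)}_H^2$. Testing against $\mathcal N u$ and using $-\Delta\mathcal N u=u$ identifies the deterministic contributions: the biharmonic term $\sp{\Delta^2u}{\mathcal N u}_{V_2}$ produces the dissipation $\norm{\nabla u}_H^2$; the term from $\xi$ produces $\int_D(\xi_1-\xi_2)(u_1-u_2)\geq0$ by monotonicity of $\beta$, which is simply kept on the left and discarded; the remaining terms produce $\int_D(\pi(u_1)-\pi(u_2))u$ and $\int_Dg\,u$, controlled by $C_\pi\norm{u}_H^2$ and by $\norm{g}_{V_1^*}\norm{\nabla u}_H$ respectively (here $u_D=0$ gives $\norm{u}_{V_1}=\norm{\nabla u}_H$). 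The It\^o correction is $\frac12\norm{B}_{\cL_2(U,V_1^*)}^2$ and the stochastic integral is a martingale with null expectation. Using \eqref{N2} to write $\norm{u}_H^2\leq\eps\norm{\nabla u}_H^2+C_\eps\norm{u}_*^2$, absorbing the small multiples of $\norm{\nabla u}_H^2$ into the dissipation, taking expectations and applying Gronwall's lemma bounds $\E\norm{u(t)}_*^2+\E\int_0^t\norm{\nabla u}_H^2$; the supremum in time, i.e.~the $C^0([0,T];V_1^*)$-norm, is then recovered by the Burkholder--Davis--Gundy inequality on the martingale term. Equivalence of $\norm\cdot_*$ with the dual norm of $V_1^*$ yields the stated estimate.

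The uniqueness statements follow at once. If the data coincide, the right-hand side vanishes and $u_1=u_2$; substituting back into the two equations leaves $\int_0^t\Delta(\xi_1-\xi_2)\,ds=0$ for all $t$, hence $\Delta(\xi_1-\xi_2)=0$ in $\bar V_2^*$, so $\xi_1-\xi_2$ is spatially constant, i.e.~$\xi_1-(\xi_1)_D=\xi_2-(\xi_2)_D$. If in addition $\beta$ is single-valued, then $\xi_i=\beta(u_i)$ with $u_1=u_2$ immediately forces $\xi_1=\xi_2$.

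The main obstacle is the rigorous justification of the It\^o formula in this low-regularity setting: $\xi$ is only $L^1(D)$-valued, the evolution lives in the dual spaces $V_2^*$ and $\bar V_2^*$, and $\frac12\norm\cdot_*^2$ must be differentiated along a process whose drift is not $H$-valued. The pairings $\int_D\xi_i u_j$ are nevertheless well-defined since $u_j\in V_2\embed L^\infty(D)$ for $N\leq3$ while $\xi_i\in L^1(D)$, and the integrability $j(u)+j^*(\xi)\in L^1$ keeps the monotonicity term under control; the formula itself would be obtained either by invoking a generalized It\^o formula of the type available for these monotone SPDEs or by a regularization argument consistent with the construction of the solutions.
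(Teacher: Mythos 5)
Your overall strategy coincides with the paper's: form the difference equation, observe that the mean-value hypothesis forces $u_D=0$, apply It\^o's formula to $\frac12\norm{u}_*^2$, use the monotonicity of $\beta$ and the Lipschitz continuity of $\pi$ together with \eqref{N2}, then Gronwall and Burkholder--Davis--Gundy. However, the step you defer in your final paragraph --- the rigorous justification of the It\^o formula --- is not a technicality that can be outsourced to a ``generalized It\^o formula of the type available for these monotone SPDEs'': it is the actual content of the paper's proof, and your sketch skips precisely the point where the argument could fail. Since the drift is only $\bar V_2^*$-valued and $\xi$ only $L^1(D)$-valued, some regularization is unavoidable; but after regularizing, the term coming from $\xi$ in the energy identity is $\int_D\xi^\delta u^\delta$, not $\int_D\xi u$, and this quantity has no sign because the smoothing destroys the pointwise inclusion $\xi\in\beta(u)$. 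You therefore cannot ``keep it on the left and discard it'' by monotonicity before passing to the limit $\delta\searrow0$; you must first prove $\xi^\delta u^\delta\to\xi u$ in $L^1(\Omega\times(0,T)\times D)$ and only then invoke monotonicity.

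The paper resolves this in two steps that are absent from your proposal. First, the smoothing operator must be chosen as $(I-\delta\Delta)^{-k}$ with $k$ large, rather than the seemingly more natural $(I+\delta\Delta^2)^{-1}$, because the former is a positive contraction on $L^1(D)$ that maps $L^1(D)$ into $V_2$ and commutes with $-\Delta$ and $\Delta^2$; this is what allows the $\xi$-term to be regularized and controlled at all. Second, the convergence $\xi^\delta u^\delta\to\xi u$ is obtained by a uniform-integrability (Vitali) argument: one dominates $\pm\frac{\eta}{4}\xi^\delta u^\delta$ by $(I-\delta\Delta)^{-k}$ applied to $j(u_1)+j(u_2)+j^*(\xi_1)+j^*(\eta|\xi_2|)$, using the generalized Young inequality, the symmetry hypothesis on $j$ (needed to pass from $j^*(\xi_2)\in L^1$ to $j^*(\eta|\xi_2|)\in L^1$ for some $\eta>0$) and the Jensen inequality for positive operators. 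This is exactly where the requirement $j(u_i)+j^*(\xi_i)\in L^1(\Omega\times(0,T)\times D)$ in the definition of strong solution is used; your remark that the pairing $\int_D\xi_i u_j$ makes sense because $V_2\embed L^\infty(D)$ gives only pointwise-in-time meaning and yields neither the time-integrability nor the limit $\delta\searrow0$. The remaining ingredients of your proposal (the mean-value reduction, the estimates of the $\pi$-, $g$-, trace and martingale terms, and the two uniqueness assertions) match the paper and are correct.
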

\begin{thm}[Existence, multiplicative noise]
  \label{thm3}
  Under the assumptions \eqref{g}--\eqref{u0} and  \eqref{mult1}--\eqref{mult3}, there exists a strong solution
  to the system \eqref{eq1}--\eqref{eq4}.
\end{thm}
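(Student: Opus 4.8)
The plan is to reduce the multiplicative problem to the additive one through a contraction argument, crucially exploiting that \eqref{mult1} forces $B$ to be $H_0$-valued. First I would fix a subinterval length $\tau\in(0,T]$, to be chosen, and introduce the class $Y_\tau$ of predictable processes carrying the strong-solution regularity on $[0,\tau]$, namely $L^2(\Omega;C^0([0,\tau];V_1^*))\cap L^2(\Omega;L^\infty(0,\tau;H))\cap L^2(\Omega;L^2(0,\tau;V_2))$. Given $v\in Y_\tau$, the coefficient $B_v:=B(\cdot,v(\cdot))$ is a fixed ($v$-dependent but no longer implicit) progressively measurable process: by the growth bound \eqref{mult3} together with $v\in L^2(\Omega;L^2(0,\tau;V_1))$ it satisfies \eqref{add1}, while the $H_0$-valued property gives $(B_v\cdot W)_D=0$ so that \eqref{add2} holds trivially. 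Hence the additive existence result (Theorem~\ref{thm1}, read on $[0,\tau]$) yields a strong solution $(u,\xi)$ of the additive problem with coefficient $B_v$, initial datum $u_0$ and source $g$, and I set $\Phi(v):=u\in Y_\tau$.

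Second, I would show that $\Phi$ contracts in the weak metric of $Z:=L^2(\Omega;C^0([0,\tau];V_1^*))$. Since two inputs $v_1,v_2\in Y_\tau$ share the same $u_0$ and $g$, and since $(B_{v_i}\cdot W)_D=0$, the compatibility hypothesis of Theorem~\ref{thm2} is met; that theorem combined with the $V_1^*$-Lipschitz estimate \eqref{mult2} gives
\[
  \norm{\Phi(v_1)-\Phi(v_2)}_Z \lesssim \norm{B(\cdot,v_1)-B(\cdot,v_2)}_{L^2(\Omega;L^2(0,\tau;\cL_2(U,V_1^*)))} \le C_B\sqrt\tau\,\norm{v_1-v_2}_Z.
\]
Choosing $\tau$ so small that the overall constant is $<1$ makes $\Phi$ a contraction in $Z$, the threshold depending only on $C_B$ and the structural constant of Theorem~\ref{thm2}, not on the data.

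Third, I would realize the fixed point through the Picard iterates $u_{n+1}:=\Phi(u_n)$ (initialized, say, at the constant process $t\mapsto u_0$), which form a Cauchy sequence in $Z$ and hence converge to some $u$. The delicate point is that $\Phi$ is defined only on the strongly regular class $Y_\tau$ while contracting in the weaker norm $Z$: to close the loop I would derive uniform bounds on $(u_n)$ in $Y_\tau$ — these follow from the same energy/It\^o estimates as in the additive case, the linear growth \eqref{mult3} feeding a Gronwall argument — and then pass to the limit. By weak-$*$ compactness in $L^2(\Omega;L^\infty(0,\tau;H))$, weak compactness in $L^2(\Omega;L^2(0,\tau;V_2))$, the Dunford--Pettis weak-$L^1$ compactness of $(\xi_n)$ inferred from the uniform bound on $j(u_n)+j^*(\xi_n)$, and the strong $V_1^*$-convergence (upgraded to a.e.\ convergence of $u_n$ by a compactness argument of Aubin--Lions type), I would identify the limit equation and recover $\xi\in\beta(u)$ by the usual monotonicity and lower-semicontinuity argument; convergence of the stochastic integral uses only the $\cL_2(U,V_1^*)$-topology supplied by \eqref{mult2}. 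The fixed-point identity $\Phi(u)=u$ then shows $(u,\xi)$ solves the multiplicative problem on $[0,\tau]$.

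Finally, I would globalize by patching. Because $B$ is $H_0$-valued the mean of $u$ is conserved, so $(u(\tau))_D=(u_0)_D$ and the datum condition in \eqref{u0} is inherited at time $\tau$ (with $u(\tau)\in L^2(\Omega,\cF_\tau;H)$ obtained from weak $H$-continuity); since the step $\tau$ is independent of the datum, I can restart from $u(\tau)$ and repeat the construction on $[\tau,2\tau]$, and so on, concatenating the finitely many local solutions into a global strong solution on $[0,T]$. I expect the main obstacle to be exactly the two-norm mismatch of the third step — reconciling the weak-norm contraction with the strong regularity needed both to define $\Phi$ and to pass to the limit while identifying the selection $\xi\in\beta(u)$ — rather than the contraction or the patching, which are routine once the uniform estimates are in hand.
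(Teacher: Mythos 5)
Your overall architecture coincides with the paper's: reduce to the additive problem via Theorems~\ref{thm1}--\ref{thm2} (using that $B$ is $\cL_2(U,H_0)$-valued to dispose of \eqref{add2} and \eqref{mult3} to get \eqref{add1}), run a Banach fixed point on a short interval whose length depends only on $C_B$, and patch. The one place where you genuinely diverge is the resolution of the two-norm mismatch you correctly flag as the main obstacle. You propose the heavy route: Picard iterates, uniform bounds in the strong class, and a full compactness passage with re-identification of $\xi\in\beta(u)$ by Dunford--Pettis and maximal-monotone closure. The paper instead observes that the continuous-dependence estimate, together with $(\Lambda v_1-\Lambda v_2)_D=0$, makes $\Lambda$ Lipschitz from the $L^2(\Omega;L^2(0,T;V_1^*))$ topology \emph{into} $L^2(\Omega;L^2(0,T;V_1))\cap L^2(\Omega;C^0([0,T];V_1^*))$; hence $\Lambda$ extends by density to a contraction $\Lambda'$ on all of $L^2(\Omega;L^2(0,T_0;V_1^*))$, and the fixed point $u=\Lambda'u$ automatically lies in the range of $\Lambda'$, i.e.\ in $L^2(\Omega;L^2(0,T_0;V_1))$, which is the domain of $\Lambda$ --- so $u=\Lambda u$ with its attendant $\xi$ and no limit passage in the equation is needed at all. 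Your route can be made to work, but note that the ``Gronwall argument'' for the uniform $Y_\tau$ bounds is really a recursion $a_{n+1}\le C_0+\kappa(\tau)a_n$ (the estimate for $u_{n+1}$ feeds on $\norm{u_n}_{L^2(\Omega;L^2(0,\tau;V_1))}$ through \eqref{mult3}), so you must check $\kappa(\tau)<1$ via an interpolation of $L^2(0,\tau;V_1)$ between $L^\infty(0,\tau;H)$ and $L^2(0,\tau;V_2)$ to gain a factor $\sqrt\tau$; also, no Aubin--Lions argument is needed for the a.e.\ convergence, since Theorem~\ref{thm2} already gives the Cauchy property of $(u_n)$ in $L^2(\Omega;L^2(0,\tau;V_1))$ (the means all equal $(u_0)_D$). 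The paper's device buys you all of this for free; your version buys nothing extra here, but it is the argument one would need if the solution map improved regularity less sharply.
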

\begin{thm}[Continuous dependence, multiplicative noise]
  \label{thm4}
  Let the data 
  $(u_0^i,g_i)$ and $B$ satisfy the assumptions \eqref{g}--\eqref{u0} and \eqref{mult1}--\eqref{mult3}, for $i=1,2$, such that
  \[
  (u_0^1)_D = (u_0^2)_D \qquad\P\text{-a.s.}
  \]
  and let $(u_i,\xi_i)$ be any respective strong
  solution to \eqref{eq1}--\eqref{eq4}, for $i=1,2$. Then, 
  \[\begin{split}
  \norm{u_1-u_2}_{L^2(\Omega; C^0([0,T]; V_1^*))} &+ \norm{\nabla(u_1-u_2)}_{L^2(\Omega; L^2(0,T; H))}\\
  &\lesssim \norm{u_0^1-u_0^2}_{L^2(\Omega; V_1^*)} + \norm{g_1-g_2}_{L^2(\Omega; L^2(0,T; V_1^*))}\,.
  \end{split}\]
  In particular, if $(u_0^1, g_1)=(u_0^2,g_2)$ then $u_1=u_2$ and $\xi_1-(\xi_1)_D=\xi_2-(\xi_2)_D$;
  moreover, if $\beta$ is also single-valued, then $\xi_1=\xi_2$ as well.
\end{thm}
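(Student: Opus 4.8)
The plan is to reduce the statement to an estimate on the difference process and then run a monotonicity-plus-Gronwall argument in the norm $\norm{\cdot}_*$, exactly as for Theorem~\ref{thm2}, the only genuinely new point being that the stochastic correction is now controlled by the solutions themselves through the Lipschitz assumption \eqref{mult2}. Set $v:=u_1-u_2$ and $\xi:=\xi_1-\xi_2$ and subtract the two strong formulations. The first thing I would check is the conservation of the mean: testing each equation with the constant $1$, the terms $\sp{\Delta^2u_i}{1}_{V_2}$ and $\sp{\Delta(\xi_i+\pi(u_i)+g_i)}{1}$ vanish since $\Delta 1=0$, while $\sp{\int_0^tB(s,u_i(s))\,dW_s}{1}=0$ because by \eqref{mult1} the operator $B$ takes values in $\cL_2(U,H_0)$, so $B(\cdot,\cdot,\cdot)\eta$ has null mean for every $\eta\in U$. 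Consequently $(u_i(t))_D=(u_0^i)_D$ for all $t$, and the hypothesis $(u_0^1)_D=(u_0^2)_D$ gives $v(t)_D=0$, i.e.\ $v(t)\in D(\mathcal N)$, for every $t$, $\P$-almost surely. This is the structural fact that makes $\mathcal N$ and the norm $\norm{\cdot}_*$ available on $v$.

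Next I would apply the generalized It\^o formula to the functional $y\mapsto\tfrac12\norm{y}_*^2$ along $v$; since $v_D=0$ this reads $\tfrac12\norm{v}_*^2=\tfrac12\norm{\nabla\mathcal Nv}_H^2$, and its duality derivative in the direction of a null-mean increment is $\sp{\cdot}{\mathcal Nv}_{V_1}$ by the symmetry \eqref{N1}. Using $\Delta\mathcal Nv=-v$, the drift splits into four pieces. The bilaplacian yields the good dissipative term $\sp{-\Delta^2v}{\mathcal Nv}=-\norm{\nabla v}_H^2$. The singular term yields $\sp{\Delta\xi}{\mathcal Nv}=-\int_D(\xi_1-\xi_2)(u_1-u_2)\le0$ by monotonicity of $\beta$; here the pairing is licit because $u_i\in V_2\embed L^\infty(D)$ for $N\le3$, while $\xi_i\in L^1(D)$. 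The $\pi$-term is estimated by $C_\pi\norm{v}_H^2$ via Lipschitz-continuity and then absorbed, through the interpolation inequality \eqref{N2}, into a fraction of $\norm{\nabla v}_H^2$ plus a multiple of $\norm{v}_*^2$. The source term is controlled, via the $V_1$--$V_1^*$ duality and Young's inequality, by $\tfrac14\norm{\nabla v}_H^2+C\norm{g_1-g_2}_{V_1^*}^2$, which is precisely where the $V_1^*$-norm of the data appears.

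It remains to treat the noise. The It\^o correction associated with $\tfrac12\norm{\cdot}_*^2$ equals $\tfrac12$ times the squared Hilbert--Schmidt norm of $B(u_1)-B(u_2)$ as an operator from $U$ into $V_1^*$ endowed with $\norm{\cdot}_*$; by the equivalence of the two norms on $V_1^*$ this is controlled by $\norm{B(u_1)-B(u_2)}_{\cL_2(U,V_1^*)}^2$, and the latter is bounded by $C_B^2\norm{v}_{V_1^*}^2\lesssim C_B^2\norm{v}_*^2$ thanks to \eqref{mult2}. This is the decisive difference with the additive case: the quadratic variation no longer produces a fixed datum on the right-hand side, but a further multiple of $\norm{v}_*^2$ to be absorbed by Gronwall. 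Collecting the estimates, after localizing and taking expectations the martingale term drops out and, once the $\norm{\nabla v}_H^2$ contributions are absorbed on the left, I obtain $\E\norm{v(t)}_*^2+\E\int_0^t\norm{\nabla v}_H^2\,ds\le\E\norm{u_0^1-u_0^2}_*^2+C\E\int_0^t\norm{v}_*^2\,ds+C\E\int_0^t\norm{g_1-g_2}_{V_1^*}^2\,ds$. Gronwall's lemma then gives the bounds in $L^2(\Omega;L^\infty(0,T;V_1^*))$ and in $L^2(\Omega;L^2(0,T;H))$ for $\nabla v$; the upgrade to the $L^2(\Omega;C^0([0,T];V_1^*))$ norm is obtained by taking the supremum in $t$ before expectation and using the Burkholder--Davis--Gundy inequality on the stochastic term, absorbing the resulting supremum.

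Finally, the uniqueness assertions follow by specialization: if the data coincide the right-hand side vanishes, whence $u_1=u_2$; inserting this into the difference of the two equations forces $\int_0^t\Delta(\xi_1-\xi_2)\,ds=0$ for all $t$, i.e.\ $\Delta(\xi_1-\xi_2)=0$ in $\bar V_2^*$, so that $\xi_1-\xi_2$ is constant and equal to $(\xi_1-\xi_2)_D$, giving $\xi_1-(\xi_1)_D=\xi_2-(\xi_2)_D$; if moreover $\beta$ is single-valued, then $\xi_i=\beta(u_i)$ and $u_1=u_2$ yield $\xi_1=\xi_2$. I expect the main obstacle to be the rigorous justification of the It\^o formula for $\norm{\cdot}_*^2$ at this low regularity---where the equation lives in $\bar V_2^*$ and $\xi_i$ is merely $L^1(D)$-valued---so that all the pairings above, in particular $\sp{\Delta\xi}{\mathcal Nv}$ and the monotonicity inequality, are meaningful; this is exactly the role played by the generalized It\^o formula already invoked for Theorem~\ref{thm2}.
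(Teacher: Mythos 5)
Your argument is correct in substance, but it is packaged quite differently from the paper's. The paper proves Theorem~\ref{thm4} in two lines: since $B$ is $\cL_2(U,H_0)$-valued, the processes $B_i:=B(\cdot,\cdot,u_i)$ satisfy the hypotheses of Theorem~\ref{thm2} with $(u_0^i)_D+(B_i\cdot W)_D=(u_0^i)_D$, so the additive continuous-dependence estimate applies verbatim and yields the displayed inequality with the extra term $\norm{B(u_1)-B(u_2)}_{L^2(\Omega;L^2(0,T;\cL_2(U,V_1^*)))}\leq C_B\norm{u_1-u_2}_{L^2(\Omega;L^2(0,T;V_1^*))}$ on the right; applying this on every subinterval $[0,t]$ and invoking Gronwall closes the estimate, and the uniqueness assertions are inherited from Theorem~\ref{thm2}. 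You instead inline the entire energy argument (It\^o for $\tfrac12\norm{\cdot}_*^2$, monotonicity of $\beta$, \eqref{N2}, absorption of the quadratic variation via \eqref{mult2}, BDG for the supremum), which reproduces exactly the computation already carried out in Subsection~\ref{cont_dep}; the reduction to Theorem~\ref{thm2} buys you the right not to redo it.

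One caveat on the step you yourself flag as the main obstacle: your justification of the pairing $\sp{\Delta\xi}{\mathcal Nv}$ via $V_2\embed L^\infty(D)$ is not how the paper makes the It\^o formula rigorous, and it is not sufficient on its own. The equation for $v$ lives in $\bar V_2^*$ with an $L^1(D)$-valued drift, so the paper first mollifies with $(I-\delta\Delta)^{-k}$ (chosen because it contracts on $L^1(D)$ and preserves the mean), applies the classical It\^o formula in $H$, and then passes to the limit $\delta\searrow0$; the convergence of $\int_0^t\!\!\int_D\xi^\delta v^\delta$ is obtained from Vitali's theorem, with uniform integrability coming from $j(u_i)+j^*(\xi_i)\in L^1$, the symmetry hypothesis on $j$, and the Jensen inequality for positive operators --- not from a Sobolev embedding. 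If you keep the direct route you must carry out this regularization; citing Theorem~\ref{thm2} sidesteps it entirely.
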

\begin{rmk}
  Note that the formulation of the continuous dependence results under the requirement that 
  the initial data have the same mean value is not new in the setting of deterministic Cahn-Hilliard systems: 
  see for example \cite{col-gil-spr, col-fuk-eqCH} for the general case of dynamic boundary conditions.
  In our setting, the natural generalization of this fact (due to the presence of the stochastic component) 
  is exactly the one that we have formulated in Theorems~\ref{thm2} and \ref{thm4}.
\end{rmk}

Finally, the last theorem that we state is a regularity result. Indeed, as we have pointed out above,
the existence Theorems~\ref{thm1} and \ref{thm3}
do not ensure any natural regularity except $L^1$ for the chemical potential: this implies that 
if we write the variational formulation of the problem, we are forced to use test functions at least in $\bar V_2$.
Since in the physical interpretation 
of the system \eqref{eq1}--\eqref{eq4} $w$ plays an important role, it is worth noting that with further
assumptions on the data $u_0$, $B$, $g$ and $j$, then $w$ is found to be more regular and a precise (and more natural) 
variational formulation of the original system can be written.
We collect these results here.
\begin{thm}[Regularity]
  \label{thm5}
  Assume that 
  \begin{gather*}
  \beta:\erre\to\erre \quad\text{locally Lipschitz continuous}\,,\\
  g \in L^2(\Omega; L^2(0,T; V_1)) \quad\text{progressively measurable}\,,\\
  u_0 \in L^2(\Omega,\cF_0, \P; V_1)\,, \qquad j(\alpha u_0)\in L^1(\Omega\times D)\quad\forall\,\alpha>0\,,\\
  B \in L^2\left(\Omega; L^2(0,T; \cL_2(U,V_{1}\cap H_0))\cap
  L^\infty(0,T; \cL_2(U,V_1^*))\right) \quad\text{progressively measurable}\,,
  \end{gather*}
 and that either one of the following conditions is satisfied:
  \begin{gather}
    \label{ip_j_pol}
    \begin{cases}
    B \in L^\infty(\Omega\times(0,T); \cL_2(U,H))\oplus L^2(0,T; L^\infty(\Omega; \cL_2(U,V_1)))\,,\\
    \exists\,R>0:\quad \beta'(x)\leq R\left(1+ |x|^2\right)\quad\text{for a.e.~}x\in\erre\,,\\
    \end{cases}\\
    \label{ip_j2}
    \begin{cases}
    N=2\,,\\
    \exists\,s>1: \quad B \in L^2(0,T; L^\infty(\Omega; \cL_2(U,V_{s})))\,,\\
    \exists\,R>0:\quad \beta'(x)\leq R\left(1+j(x)\right) \quad\text{for a.e.~}x\in\erre\,,
    \end{cases}\\
    \label{ip_j3}
    \begin{cases}
      N=3\,,\\
      \exists\,s>\frac32: \quad B \in L^2(0,T; L^\infty(\Omega; \cL_2(U,V_{s})))\,,\\
      \exists\,R>0:\quad \beta'(x)\leq R\left(1+j(x)\right) \quad\text{for a.e.~}x\in\erre\,.
    \end{cases}
  \end{gather}
  Let $(u,\xi)$ be the strong solution to \eqref{eq1}--\eqref{eq4}
  and set $w:=-\Delta u + \xi +\pi(u) + g$. Then
  \begin{gather*}
  u\in L^2(\Omega; C^0([0,T]; H))\cap L^2(\Omega; L^\infty(0,T; V_1))\cap L^2(\Omega; L^2(0,T; V_2))\,,\\
  u-B\cdot W \in L^2(\Omega; H^1(0,T; V_1^*))\,,\\
  w \in L^2(\Omega; L^2(0,T; V_1))\,, \qquad \xi\in L^2(\Omega; L^2(0,T; H))
  \end{gather*}
  and the following variational formulation of \eqref{eq1}--\eqref{eq4} holds:
  \begin{gather*}
    \sp{\partial_t(u-B\cdot W)(t)}\varphi_{V_1} + \int_D\nabla w(t)\cdot\nabla\varphi=0 \qquad\forall\,\varphi\in V_1\,,\\
    \int_Dw(t)\varphi = \int_D\nabla u(t)\cdot\nabla\varphi + \int_D\xi(t)\varphi + \int_D\pi(u(t))\varphi + \int_Dg(t)\varphi
    \qquad\forall\,\varphi\in V_1\,,
  \end{gather*}
  for almost every $t\in(0,T)$, $\P$-almost surely.
\end{thm}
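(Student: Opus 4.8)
The plan is to return to the regularized problem of Section~\ref{additive}, whose solutions I denote by $u_{\lambda,\eps}$: there $\beta$ is replaced by its Yosida approximation $\beta_\lambda$ and $B$ by the smoothed operator $B_\eps:=(I-\eps\Delta)^{-k}B$, which, being built from the Neumann resolvent, preserves null mean and is a contraction on each $V_s$. Writing $w_{\lambda,\eps}:=-\Delta u_{\lambda,\eps}+\beta_\lambda(u_{\lambda,\eps})+\pi(u_{\lambda,\eps})+g$, the idea is to establish a new family of a priori estimates, \emph{uniform in $\lambda$ and $\eps$}, based on the regularized Ginzburg--Landau energy
\[
  \mathcal E_\lambda(x):=\tfrac12\norm{\nabla x}_H^2+\int_D j_\lambda(x)\,, \qquad x\in V_1\,,
\]
whose derivative along the solution is $-\Delta u_{\lambda,\eps}+\beta_\lambda(u_{\lambda,\eps})=w_{\lambda,\eps}-\pi(u_{\lambda,\eps})-g$. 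Applying the generalized It\^o formula to $t\mapsto\mathcal E_\lambda(u_{\lambda,\eps}(t))$ along $du_{\lambda,\eps}=\Delta w_{\lambda,\eps}\,dt+B_\eps\,dW$ and integrating by parts with the homogeneous Neumann conditions, the leading deterministic contribution is the \emph{good} dissipation $-\int_0^t\norm{\nabla w_{\lambda,\eps}}_H^2\,ds$, which is precisely the quantity needed to bound $w$ in $L^2(0,T;V_1)$.

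The remaining first-order terms $\int_0^t\int_D\nabla(\pi(u_{\lambda,\eps})+g)\cdot\nabla w_{\lambda,\eps}$ are absorbed by Young's inequality: a fraction of $\norm{\nabla w_{\lambda,\eps}}_H^2$ is reabsorbed into the dissipation, while $\nabla\pi(u_{\lambda,\eps})=\pi'(u_{\lambda,\eps})\nabla u_{\lambda,\eps}$ is estimated through the Lipschitz constant $C_\pi$ and $\nabla g$ through $g\in L^2(\Omega;L^2(0,T;V_1))$, both being controlled by $\mathcal E_\lambda(u_{\lambda,\eps})$ and the data. The stochastic integral is a martingale and vanishes in expectation. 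Denoting by $(e_k)_k$ an orthonormal basis of $U$, the crux is the It\^o second-order correction
\[
  \tfrac12\int_0^t\sum_k\Big(\norm{\nabla B_\eps e_k}_H^2+\int_D\beta_\lambda'(u_{\lambda,\eps})\,|B_\eps e_k|^2\Big)\,ds\,,
\]
whose first summand is harmless, being bounded by $\tfrac12\int_0^t\norm{B_\eps}_{\cL_2(U,V_1)}^2\,ds\le\tfrac12\int_0^t\norm{B}_{\cL_2(U,V_1)}^2\,ds$ thanks to $B\in\cL_2(U,V_1\cap H_0)$ and the contractivity of the resolvent.

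The heart of the argument, and its main obstacle, is to control the second summand $\sum_k\int_D\beta_\lambda'(u_{\lambda,\eps})\,|B_\eps e_k|^2$ uniformly in $\lambda,\eps$. Using the elementary bound $0\le\beta_\lambda'\le\beta'\circ(I+\lambda\beta)^{-1}$ one transfers the growth hypothesis on $\beta'$ to $\beta_\lambda'$. Under \eqref{ip_j2}--\eqref{ip_j3} the assumption $\beta'\le R(1+j)$ yields $\beta_\lambda'(u_{\lambda,\eps})\lesssim1+j_\lambda(u_{\lambda,\eps})$, so that
\[
  \sum_k\int_D\beta_\lambda'(u_{\lambda,\eps})\,|B_\eps e_k|^2\lesssim\Big(\sum_k\norm{B_\eps e_k}_{L^\infty(D)}^2\Big)\Big(1+\int_Dj_\lambda(u_{\lambda,\eps})\Big)\,;
\]
the Sobolev embedding $V_s\embed L^\infty(D)$, which holds exactly for $s>N/2$ (hence $s>1$ if $N=2$ and $s>\tfrac32$ if $N=3$), gives $\sum_k\norm{B_\eps e_k}_{L^\infty(D)}^2\le C\norm{B}_{\cL_2(U,V_s)}^2\in L^1(0,T)$, while $\int_Dj_\lambda(u_{\lambda,\eps})\le\mathcal E_\lambda(u_{\lambda,\eps})$, so the Gronwall lemma closes the estimate. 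Under \eqref{ip_j_pol} one instead uses $\beta'\le R(1+|\cdot|^2)$ together with the embeddings $V_1\embed L^6(D)$ and $V_2\embed L^\infty(D)$ (both valid for $N\le3$) and the decomposition of $B$ prescribed there, estimating the dangerous factor through $\int_D|u_{\lambda,\eps}|^2|B_\eps e_k|^2\le\norm{u_{\lambda,\eps}}_{L^6(D)}^2\norm{B_\eps e_k}_{L^3(D)}^2$ and reabsorbing $\norm{u_{\lambda,\eps}}_{L^6(D)}^2$, controlled by the energy and by the (conserved, since $B$ is $H_0$-valued) mean, this time possibly coupling the energy estimate with the elliptic $V_2$-estimate below. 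Taking expectations and invoking Gronwall then produces bounds, uniform in $\lambda,\eps$, for $u_{\lambda,\eps}$ in $L^2(\Omega;L^\infty(0,T;V_1))$, for $\int_Dj_\lambda(u_{\lambda,\eps})$ in $L^1(\Omega\times(0,T))$ and for $\nabla w_{\lambda,\eps}$ in $L^2(\Omega\times(0,T);H)$.

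It remains to upgrade these bounds and pass to the limit. Testing the elliptic identity $-\Delta u_{\lambda,\eps}+\beta_\lambda(u_{\lambda,\eps})=w_{\lambda,\eps}-\pi(u_{\lambda,\eps})-g$ by $\beta_\lambda(u_{\lambda,\eps})$ and using $\int_D(-\Delta u_{\lambda,\eps})\beta_\lambda(u_{\lambda,\eps})=\int_D\beta_\lambda'(u_{\lambda,\eps})|\nabla u_{\lambda,\eps}|^2\ge0$ bounds $\beta_\lambda(u_{\lambda,\eps})$, hence $\Delta u_{\lambda,\eps}$, in $L^2(\Omega\times(0,T);H)$, giving $u_{\lambda,\eps}\in L^2(\Omega;L^2(0,T;V_2))$; a standard control of the spatial mean $(w_{\lambda,\eps})_D=(\beta_\lambda(u_{\lambda,\eps}))_D+(\pi(u_{\lambda,\eps}))_D+g_D$ then promotes the gradient bound into a full bound on $w_{\lambda,\eps}$ in $L^2(0,T;V_1)$. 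Passing to the limit as $\lambda,\eps\searrow0$ exactly as in the proof of Theorem~\ref{thm1}, and identifying the limits by weak and weak-$*$ lower semicontinuity, one obtains $w\in L^2(\Omega;L^2(0,T;V_1))$, $\xi\in L^2(\Omega;L^2(0,T;H))$ and $u\in L^2(\Omega;L^\infty(0,T;V_1))\cap L^2(\Omega;L^2(0,T;V_2))$ for the genuine strong solution, whose uniqueness (Theorem~\ref{thm2}) guarantees these are properties of $(u,\xi)$ itself. Finally, since $w$ is now $V_1$-valued, one integrates by parts in the definition of strong solution to move the Laplacian onto $w$, obtaining
\[
  \sp{\partial_t(u-B\cdot W)(t)}{\varphi}_{V_1}+\int_D\nabla w(t)\cdot\nabla\varphi=0\qquad\forall\,\varphi\in V_1\,;
\]
this shows $\partial_t(u-B\cdot W)=\Delta w\in L^2(0,T;V_1^*)$, whence $u-B\cdot W\in L^2(\Omega;H^1(0,T;V_1^*))$, while the relation $w=-\Delta u+\xi+\pi(u)+g$ with $\xi\in H$ and $u\in V_2$ gives the second variational identity; the continuity $u\in L^2(\Omega;C^0([0,T];H))$ follows from $u-B\cdot W\in H^1(0,T;V_1^*)\cap L^\infty(0,T;V_1)$ and $B\cdot W\in C^0([0,T];H)$.
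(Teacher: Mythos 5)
Your overall strategy is the paper's: an It\^o expansion of the regularized Ginzburg--Landau energy along the approximated solutions, with the dissipation $\norm{\nabla w_{\lambda,\eps}}_H^2$ on the good side, the trace term controlled through the growth conditions on $\beta'$ combined with $V_1\embed L^6(D)$ and $V_s\embed L^\infty(D)$, a Gronwall closure, and lower semicontinuity in the limit (dropping $\widehat\pi$ from the energy and moving $\pi$ into the drift term is an equivalent cosmetic variant). There are, however, two genuine gaps. The first concerns the stochastic integral, which does \emph{not} simply ``vanish in expectation'' if you want the stated conclusion. Its integrand is $(w_{\lambda,\eps}-\pi(u_{\lambda,\eps})-g,\,B_\eps\,\cdot\,)_H$, and $w_{\lambda,\eps}$ is controlled only through its gradient, in $L^2(0,T;H)$; so even the true-martingale property is not free, and taking plain expectations at fixed $t$ only yields a bound in $L^\infty(0,T;L^2(\Omega;V_1))$, not the claimed $L^2(\Omega;L^\infty(0,T;V_1))$ (which is also what the Gronwall step with $\E\sup_{r\le s}$ requires). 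One must take $\E\sup_t$ and invoke Burkholder--Davis--Gundy; this is exactly where the paper uses that $B$ is $H_0$-valued, rewriting $(w,Be_k)_H=(\nabla w,\nabla\mathcal N Be_k)_H\le\norm{\nabla w}_H\norm{Be_k}_{*}$, and where the hypothesis $B\in L^\infty(0,T;\cL_2(U,V_1^*))$ enters, to pull $\norm{B}^2_{\cL_2(U,V_1^*)}$ out of the quadratic variation since $\norm{\nabla w_{\lambda,\eps}}_H^2$ is only integrable in time. Your proposal never uses that hypothesis, which is a symptom that the key estimate is missing.

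The second gap is that It\^o's formula cannot be applied directly to $\mathcal E_\lambda(u_{\lambda,\eps})$: the drift is $\Delta w_{\lambda,\eps}$ with $w_{\lambda,\eps}$ a priori only $H$-valued (so $\Delta w_{\lambda,\eps}\in V_2^*$), while $D\mathcal E_\lambda(u_{\lambda,\eps})=-\Delta u_{\lambda,\eps}+\beta_\lambda(u_{\lambda,\eps})$ lies only in $H$, not in $V_2$, so the duality pairing in the formula is undefined. The paper first applies $(I-\delta\Delta)^{-2}$ to the whole equation so that every term is $V_1$-valued, applies the classical It\^o formula there (after mollifying $\beta_\lambda,\pi$ to $C^2_b$ for the second Fr\'echet derivative), and only then lets $\delta\searrow0$, paying the price of the commutator $\widetilde w_\lambda^\delta-w_\lambda^\delta$. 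A smaller remark: your detour through testing the elliptic identity by $\beta_\lambda(u_{\lambda,\eps})$ to reach $\xi\in L^2(\Omega;L^2(0,T;H))$ needs an $L^2$ control of the spatial mean $(w_{\lambda,\eps})_D$, i.e.\ of $(\beta_\lambda(u_{\lambda,\eps}))_D$, which the available estimates provide only in $L^1$; the paper avoids this entirely by taking $u\in L^2(\Omega;L^2(0,T;V_2))$ from Theorem~\ref{thm1}, deducing $w-w_D=\mathcal N(-\Delta w)\in L^2(\Omega;L^2(0,T;V_1))$ from the gradient bound, and recovering $\xi=w+\Delta u-\pi(u)-g$ by difference.
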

\begin{rmk}\label{yosida}
  Let us comment on the growth assumptions on $\beta$ and $j$ in \eqref{ip_j_pol}--\eqref{ip_j3}:
  we show that $\beta$ is locally Lipschitz and satisfies \eqref{ip_j_pol}--\eqref{ip_j3} if and only if
  its Yosida approximations satisfy, respectively,
  \beq\label{ip_yos}
  \beta_\lambda'(x)\leq R(1+|x|^2) \quad\text{or}\quad \beta_\lambda'(x)\leq R(1+j_\lambda(x)) \qquad\text{for a.e.~}x\in\erre\,,
  \quad\forall\,\lambda\in(0,1]\,.
  \eeq
  Indeed, under \eqref{ip_j_pol}--\eqref{ip_j3}, condition \eqref{ip_yos} follows from the computation
  \[
    \beta_\lambda'(x)=\frac{\beta'((I+\lambda\beta)^{-1}x)}{1+\lambda\beta'((I+\lambda\beta)^{-1}x)}\leq
    \beta'((I+\lambda\beta)^{-1}x)\lesssim 1+j((I+\lambda\beta)^{-1}x)
  \]
  together with the contraction property of the resolvent $(I+\lambda\beta)^{-1}$ and the fact that 
  $j((I+\lambda\beta)^{-1})\leq j_\lambda$.
  Viceversa, if $\beta:\erre\to2^\erre$ is a maximal monotone graph with $D(\beta)=\erre$
  satisfying \eqref{ip_yos},
  then $\beta$
  is single-valued,
  $\beta\in W^{1,\infty}_{loc}(\erre)$ (or in other words $\beta:\erre\to\erre$ is locally Lipschitz continuous)
  and satisfies \eqref{ip_j_pol}--\eqref{ip_j3}.
  Indeed, for any compact interval $I\subseteq\erre$, 
  $(\beta_\lambda')$ and $(\beta_\lambda)$ are uniformly bounded in $L^\infty(I)$ thanks to
  any of \eqref{ip_yos} and the 
  hypothesis $D(\beta)=\erre$, respectively.
  Since $\beta_\lambda\nearrow\beta^0$, as well-known, we deduce that $\beta^0\in W^{1,\infty}(I)$;
  moreover, recalling that $\beta^0$ is discontinuous at any point in which $\beta$ is multivalued,
  we also infer that $\beta$ is singlevalued and $\beta=\beta_0\in W^{1,\infty}(I)$, hence $\beta\in W^{1,\infty}_{loc}(\erre)$.
  Furthermore, \eqref{ip_yos} implies that
  \[
    \int_I\beta_\lambda'(x)\varphi(x)\,dx\leq \int_IR(1+|x|^2)\varphi(x)\,dx \quad\text{or}\quad
     \int_I\beta_\lambda'(x)\varphi(x)\,dx\leq \int_IR(1+j_\lambda(x))\varphi(x)\,dx
  \]
  for every $\varphi\in L^1(I)$, $\varphi\geq0$. Hence, since $\beta_\lambda'\wstarto\beta'$ in $L^\infty(I)$, letting
  $\lambda\searrow0$ yields
  \[
    \int_I\beta'(x)\varphi(x)\,dx\leq \int_IR(1+|x|^2)\varphi(x)\,dx \quad\text{or}\quad
     \int_I\beta'(x)\varphi(x)\,dx\leq \int_IR(1+j(x))\varphi(x)\,dx
  \]
  for every $\varphi\in L^1(I)$, $\varphi\geq0$, from which the conditions in \eqref{ip_j_pol}--\eqref{ip_j3}.
\end{rmk}
\begin{rmk}
  Let us comment on the assumptions \eqref{ip_j_pol}--\eqref{ip_j3}. First of all, 
  it is worth pointing out that a large class of operators $B$
  satisfying the hypotheses above is composed by all the nonrandom and
  time-independent operators $B$ in $\cL_2(U, V_s\cap H_0)$ ($s=1$, $s>1$ or $s>3/2$, respectively).
  Secondly, note that \eqref{ip_j_pol} we are assuming that $\beta'$
  are uniformly bounded by a polynomial of degree $2$: this condition is very natural in the setting of Cahn-Hilliard systems 
  and is met when $\beta$ is a polynomial of degree $3$, which corresponds to the classical case of a double-well
  potential of degree $4$. On the other side, let us stress that 
  in \eqref{ip_j2}--\eqref{ip_j3} the requirement for $\beta$ is much 
  weaker and is met by a very large class of functions: for example,
  any function $\beta$ that can be written as a polynomial (of arbitrary degree) times
  a first order exponential. The possibility of taking into account also singular potentials in the context of regularity
  for stochastic Cahn-Hilliard systems is very interesting and provides a significant improvement 
  to the existing results in literature, which deal essentially with the classical case of a polynomial double-well
  potential.
\end{rmk}

%%%%%%%%%%%%%%%%%%%%%%%%%%%%%%%%%%%%%%%%%%%%%%%%%%

\section{Well-posedness with additive noise}
\setcounter{equation}{0}
\label{additive}

In this section, we present the proof of Theorems~\ref{thm1}--\ref{thm2}.
The proof is divided in several steps: we approximate the nonlinearity of the problem using classical
Yosida regularizations, we prove uniform estimates on the approximated solutions and then we pass to the limit
in suitable topologies through monotonicity and compactness arguments.

We shall firstly assume that 
\beq\label{strong_B}
  B \in L^2(\Omega; L^2(0,T; \cL_2(U, \bar V)))\,,
\eeq
where $\bar V$ is a separable Hilbert space continuously embedded in $\bar V_2$:
for example, by the Sobolev embeddings theorems
we can take $\bar V:= \{v\in H^m(D): \partial_{\bf n}v=0 \text{ in } \Gamma\}$, 
where $m>2+\frac{N}{2}$. This hypothesis will be removed at the end of the section.

\subsection{The approximated problem}
\label{approximation}
For any $\lambda\in(0,1]$, let $\beta_\lambda:\erre\to\erre$ be the Yosida approximation of $\beta$ and
$j_\lambda:\erre\to[0,+\infty)$ the Moreau-Yosida regularization of $j$. We consider the approximated problem
\begin{align}
  \label{eq1_app}
  du_\lambda(t) - \Delta w_\lambda(t)\,dt = B(t)\,dW_t \qquad&\text{in } (0,T)\times D\,,\\
  \label{eq2_app}
  w_\lambda = -\Delta u_\lambda + \beta_\lambda(u_\lambda) + \pi(u_\lambda) + g \qquad&\text{in } (0,T)\times D\,,\\
  \label{eq3_app}
  \partial_{\bf n} u_\lambda=0\,, \quad \partial_{\bf n}w_\lambda=0 \qquad&\text{in } (0,T)\times\Gamma\,,\\
  \label{eq4_app}
  u_\lambda(0)=u_0 \qquad&\text{in } D\,,
\end{align}
In order to solve \eqref{eq1_app}--\eqref{eq4_app}, as we have done in the previous section,
we first need to identify a possible variational formulation
of the approximated problem. To this aim, if we introduce the operator $\mathcal A_\lambda$ as 
\begin{gather*}
  D(\mathcal A_\lambda):=\left\{(\omega,t,v)\in\Omega\times[0,T]\times V_2: \; 
  -\Delta v + \beta_\lambda(v) + \pi(v)+g(\omega,t) \in V_2\right\}\,,\\
  \mathcal A_\lambda (\omega,t,v):= -\Delta(-\Delta v + \beta_\lambda(v) + \pi(v) + g(\omega,t))\,, \qquad v\in D(\mathcal A_\lambda)\,,
\end{gather*}
then the problem \eqref{eq1_app}--\eqref{eq4_app} can be written (formally) in terms only of the variable $u_\lambda$ as
\[
  d u_\lambda(t) + \mathcal A_\lambda (t,u_\lambda(t))\,dt = B(t)\,dW_t\,, \qquad u_\lambda(0)=u_0\,.
\]
Now, in order to get a weak formulation of our problem, 
note that for any $\varphi \in V_2$ and $(\omega,t,v)\in D(\mathcal A_\lambda)$, by integration by parts we have that
\[\begin{split}
\left(\mathcal A_\lambda (\omega,t,v), \varphi\right)_H&=\int_D(-\Delta v + \beta_\lambda(v) + \pi(v) + g(\omega,t))(-\Delta\varphi)\\
&=\int_D\Delta v\Delta\varphi - \int_D\beta_\lambda(v)\Delta\varphi - \int_D\pi(v)\Delta\varphi - \int_D g(\omega,t)\Delta\varphi\\
&\leq\left(\norm{\Delta v}_H + \frac1\lambda\norm{v}_H + C_\pi\norm{v}_H+ \norm{g(\omega,t)}_H\right)\norm{\Delta\varphi}_H\,.
\end{split}\]
This implies that $\mathcal A_\lambda$ can be extended to a variational operator 
$A_\lambda: \Omega\times[0,T]\times V_2\to V_2^*$,
\[
  \sp{A_\lambda(\omega,t,v)}{\varphi}_{V_2}:=
  \int_D\Delta v\Delta\varphi - \int_D\beta_\lambda(v)\Delta\varphi - \int_D\pi(v)\Delta\varphi - \int_D g(\omega,t)\Delta\varphi\,,
\]
for $v,\varphi\in V_2$ and $(\omega,t)\in \Omega\times[0,T]$.
Note that with the notation that we have introduced
we have the representation $A_\lambda=\Delta^2-\Delta\beta_\lambda -\Delta\pi-\Delta g$.
Hence, the idea is to study the approximated problem \eqref{eq1_app}--\eqref{eq4_app} using the classical
variational theory by Pardoux and Krylov-Rozovski{\u\i} in the Hilbert triplet $(V_2,H,V_2^*)$. We need the following lemma.

\begin{lem}
  \label{prop_A}
  Let $\lambda\in(0,1]$. Then, the operator $A_\lambda:\Omega\times[0,T]\times V_2\to V_2^*$
  is progressively measurable, hemicontinuous, weakly monotone, weakly coercive and bounded, i.e.
  \begin{itemize}
    \item the map
    \[
    \eta\mapsto \sp{A_\lambda(\omega,t,v_1+\eta v_2)}{\varphi}_{V_2}\,, \qquad \eta\in\erre\,,
    \]
    is continuous for every 
    $(\omega,t)\in\Omega\times[0,T]$ and $v_1,v_2,\varphi\in V_2$;
    \item there exists a constant $c>0$ such that 
    \[
    \sp{A_\lambda (\omega,t,v_1)-A_\lambda (\omega,t,v_2)}{v_1-v_2}_{V_2}\geq -c\norm{v_1-v_2}_H^2
    \]
    for every 
    $(\omega,t)\in\Omega\times[0,T]$ and $v_1,v_2\in V_2$;
    \item there exist two constants $c_1,c_1'>0$ and an adapted process $f_1 \in L^1(\Omega\times(0,T))$ 
    such that 
    \[
    \sp{A_\lambda (\omega,t,v)}{v}_{V_2}\geq c_1\norm{v}^2_{V_2} - c_1'\norm{v}^2_H - f_1(\omega,t)
    \]
    for every
    $(\omega,t)\in\Omega\times[0,T]$ and $v\in V_2$; 
    \item there exists a constant $c_2>0$ and an adapted process $f_2 \in L^2(\Omega\times(0,T))$ such that
    \[
    \norm{A_\lambda (\omega,t,v)}_{V_2^*}\leq c_2\norm{v}_{V_2} + f_2(\omega,t)
    \]
    for every $(\omega,t)\in\Omega\times[0,T]$ and $v\in V_2$.
  \end{itemize}
\end{lem}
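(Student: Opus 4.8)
The plan is to verify the five listed properties one at a time, exploiting that the only nonlinear ingredients of $A_\lambda$ are $\beta_\lambda$ and $\pi$, both globally Lipschitz (with constants $\frac1\lambda$ and $C_\pi$), and that the dependence on $(\omega,t)$ enters solely through the source $g$. Throughout I would work with the equivalent norm $\norm{v}_{V_2}^2=\norm{v}_H^2+\norm{\Delta v}_H^2$ and the elementary pointwise bounds $\norm{\beta_\lambda(v)}_H\leq\frac1\lambda\norm{v}_H$ (using $\beta_\lambda(0)=0$, which holds since $0\in\beta(0)$) and $\norm{\pi(v)}_H\leq C_0|D|^{1/2}+C_\pi\norm{v}_H$.

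Boundedness is already essentially contained in the integration-by-parts computation preceding the statement: estimating each of the four integrals defining $\sp{A_\lambda(v)}{\varphi}_{V_2}$ by Cauchy--Schwarz and the above bounds yields $\norm{A_\lambda(v)}_{V_2^*}\leq(1+C_\pi+\frac1\lambda)\norm{v}_{V_2}+f_2$ with $f_2:=C_0|D|^{1/2}+\norm{g}_H\in L^2(\Omega\times(0,T))$ by \eqref{g}, giving $c_2$. For hemicontinuity I would fix $v_1,v_2,\varphi$ and note that $\eta\mapsto\int_D\Delta(v_1+\eta v_2)\Delta\varphi$ is affine, while $v_1+\eta v_2\to v_1+\eta_0 v_2$ in $H$ as $\eta\to\eta_0$ forces $\beta_\lambda(v_1+\eta v_2)\to\beta_\lambda(v_1+\eta_0 v_2)$ and $\pi(v_1+\eta v_2)\to\pi(v_1+\eta_0 v_2)$ in $H$ by Lipschitz continuity; pairing against $\Delta\varphi\in H$ gives continuity of the remaining two terms.

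For weak monotonicity, setting $v:=v_1-v_2$ the leading term $\int_D\Delta v\,\Delta v$ equals $\norm{\Delta v}_H^2$ and the $g$-terms cancel, so it remains to absorb $-\int_D(\beta_\lambda(v_1)-\beta_\lambda(v_2))\Delta v$ and $-\int_D(\pi(v_1)-\pi(v_2))\Delta v$; bounding each first factor by its Lipschitz constant times $\norm{v}_H$ and applying Young's inequality to split off $\norm{\Delta v}_H$ leaves a lower bound $-c\norm{v}_H^2$ with $c$ depending on $\lambda$ and $C_\pi$. Coercivity is handled in the same spirit: in $\sp{A_\lambda(v)}{v}_{V_2}$ the leading term is $\norm{\Delta v}_H^2$, and the three remaining integrals are estimated by Young's inequality so as to absorb a small fixed fraction of $\norm{\Delta v}_H^2$, after which the identity $\norm{\Delta v}_H^2=\norm{v}_{V_2}^2-\norm{v}_H^2$ converts the surviving positive term into $c_1\norm{v}_{V_2}^2-c_1'\norm{v}_H^2$, with $f_1\in L^1(\Omega\times(0,T))$ collecting the $C_0$- and $g$-contributions. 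Finally, progressive measurability follows because, for each fixed $v$, the map $(\omega,t)\mapsto A_\lambda(\omega,t,v)$ is affine in $g(\omega,t)$ through the continuous linear map $g\mapsto-\Delta g\in V_2^*$, hence progressively measurable by \eqref{g}, and joint continuity in $v$ (a consequence of the bounds just proved) upgrades this to measurability of the Carath\'eodory map.

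The calculations are genuinely routine; the only point needing a little care is the coercivity estimate, where the Young weights must be chosen so that the negative contributions of the $\pi$- and $g$-terms together consume strictly less than the full $\norm{\Delta v}_H^2$ coming from the biharmonic part. Here I could optionally sharpen the treatment of the $\beta_\lambda$-term by integrating by parts: since $v\in V_2$ satisfies $\partial_{\bf n}v=0$ and $\beta_\lambda(v)\in V_1$ with $\nabla\beta_\lambda(v)=\beta_\lambda'(v)\nabla v$, one obtains $-\int_D\beta_\lambda(v)\Delta v=\int_D\beta_\lambda'(v)|\nabla v|^2\geq0$, so that term can simply be dropped and the constants made independent of $\lambda$. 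For the present lemma, however, where the constants are permitted to depend on the fixed $\lambda$, the crude Lipschitz-plus-Young bound on $\int_D\beta_\lambda(v)\Delta v$ already suffices, so I would record the integration-by-parts refinement only as a remark anticipating the later $\lambda$-uniform estimates.
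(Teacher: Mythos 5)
Your proposal is correct and follows essentially the same route as the paper: each property is verified by Cauchy--Schwarz/H\"older and Young's inequality using the Lipschitz constants $\tfrac1\lambda$ of $\beta_\lambda$ and $C_\pi$ of $\pi$, with $f_1=C_0^2|D|+\norm{g}_H^2$ and $f_2=C_0|D|^{1/2}+\norm{g}_H$ collecting the data terms, and measurability reduced to that of $g$. Your closing remark about integrating by parts to drop the $\beta_\lambda$-term is sound and indeed anticipates the $\lambda$-uniform first estimate in the paper, but, as you say, it is not needed here.
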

\begin{proof}
  First of all, 
  $A_\lambda$ is progressively measurable since so is $g$. Moreover,
  for every $(\omega,t)\in\Omega\times[0,T]$ and $v_1,v_2,\varphi\in V_2$, we have
  \[\begin{split}
  \sp{A_\lambda(\omega,t,v_1+\eta v_2)}\varphi_{V_2}&=\int_D\Delta v_1\Delta\varphi+\eta\int_D\Delta v_2\Delta\varphi\\
  &-\int_D\beta_\lambda(v_1+\eta v_2)\Delta\varphi - \int_D\pi(v_1+\eta v_2)\Delta\varphi - \int_Dg(\omega,t)\Delta\varphi\,,
  \end{split}\]
  which is continuous in $\eta\in\erre$ by the Lipschitz-continuity of $\beta_\lambda$ and $\pi$.
  Secondly, 
  for every $(\omega,t)\in \Omega\times[0,T]$ and $v_1,v_2\in V_2$,
  an easy computation based on the H\"older and Young inequalities shows that
  \[\begin{split}
  &\sp{A_\lambda (\omega,t,v_1)-A_\lambda (\omega,t,v_2)}{v_1-v_2}_{V_2}\\
  &=\int_D|\Delta(v_1-v_2)|^2-\int_D(\beta_\lambda(v_1)-\beta_\lambda(v_2))\Delta(v_1-v_2)
  -\int_D(\pi(v_1)-\pi(v_2))\Delta(v_1-v_2)\\
  &\geq\norm{\Delta(v_1-v_2)}_{H}^2-\left(\frac1\lambda+C_\pi\right)\norm{v_1-v_2}_H\norm{\Delta(v_1-v_2)}_{H}\\
  &\geq
  \frac12\norm{v_1-v_2}_{V_2}^2 - \frac12\left(1+\left(\frac1\lambda+C_\pi\right)^2\right)\norm{v_1-v_2}_H^2
  \end{split}\]
  from which the weak monotonicity with the choice $c=\frac12+\frac12(\frac1\lambda + C_\pi)^2$.
  Similarly, for every $(\omega,t)\in \Omega\times[0,T]$ and $v\in V_2$,
  by the H\"older inequality 
  and the Lipschitz continuity of $\beta_\lambda$ and $\pi$,
  \[\begin{split}
  &\sp{A_\lambda (\omega,t,v)}{v}_{V_2}=
  \int_D|\Delta v|^2-\int_D\beta_\lambda(v)\Delta v
  -\int_D\pi(v)\Delta v - \int_D g(\omega,t)\Delta v\\
  &\qquad\qquad
  \geq\norm{\Delta v}_{H}^2-\left(\frac1\lambda+C_\pi\right)\norm{v}_H\norm{\Delta v}_{H}-C_0|D|^{1/2}\norm{\Delta v}_{H}
  -\norm{g(\omega,t)}_H\norm{\Delta v}_{H}\\
  &\qquad\qquad\geq\frac14\norm{v}_{V_2}^2 - \left(\frac14+\left(\frac1\lambda+C_\pi\right)^2\right)\norm{v}_H^2 
  -C_0^2|D|-\norm{g(\omega,t)}_H^2
  \end{split}\]
  so that $c_1=\frac14$, $c_1'=\frac14+(\frac1\lambda+C_\pi)^2$ and 
  $f_1=C_0^2|D| + \norm{g}_H^2\in L^1(\Omega\times(0,T))$ is a possible choice.
  Finally, for every $(\omega,t)\in \Omega\times[0,T]$ and $v,\varphi\in V_2$, by the H\"older inequality 
  and the Lipschitz continuity of $\beta_\lambda$ and $\pi$ we have
  \[\begin{split}
    \sp{A_\lambda (\omega,t,v)}{\varphi}_{V_2}&=
    \int_D\Delta v\Delta\varphi - \int_D\beta_\lambda(v)\Delta\varphi - \int_D\pi(v)\Delta\varphi - \int_D g(\omega,t)\Delta\varphi\\
    &\leq\left(\norm{\Delta v}_H + \left(\frac1\lambda+C_\pi\right)\norm v_H + C_0|D|^{1/2}+\norm{g(\omega,t)}_H\right)
    \norm{\varphi}_{V_2}
  \end{split}\]
  which implies that 
  \[
  \norm{A_\lambda(\omega,t,v)}_{V_2^*}\leq \left(1+\frac1\lambda + C_\pi\right)\norm{v}_{V_2} + 
  C_0|D|^{1/2}+\norm{g(\omega,t)}_H\,,
  \]
  which proves that $A_\lambda$ is also bounded
  with the choice of coefficients $c_2:=1+\frac1\lambda + C_\pi$
  and $f_2=C_0|D|^{1/2} + \norm{g}_H\in L^2(\Omega\times(0,T))$.
\end{proof}

By Lemma~\ref{prop_A}, the classical variational theory by Pardoux and Krylov-Rozovski{\u\i}
(see \cite{Pard, KR-spde})
ensures that there exists a unique process 
\[
  u_\lambda \in L^2(\Omega; C^0([0,T]; H)) \cap L^2(\Omega; L^2(0,T; V_2))
\]
such that
\beq
\label{app_eq}
  u_\lambda(t) + \int_0^t A_\lambda(s, u_\lambda(s))\,ds = u_0 + \int_0^t B(s)\,dW_s \qquad\forall\,t\in[0,T]\,, \quad\P\text{-a.s.}
\eeq

\subsection{The first estimate}
\label{first_est}
It\^o's formula yields
\[\begin{split}
  \frac12\norm{u_\lambda(t)}_H^2 &+ \int_0^t\norm{\Delta u_\lambda(s)}_H^2\,ds + 
  \int_0^t\!\!\int_D\beta_\lambda'(u_\lambda(s))|\nabla u_\lambda(s)|^2\,ds\\
  &=\frac12\norm{u_0}_H^2 + \int_0^t\!\!\int_D\pi(u_\lambda(s))\Delta u_\lambda(s)\,ds
  +\int_0^t\!\!\int_Dg(s)\Delta u_\lambda(s)\,ds\\
  &+\frac12\int_0^t\norm{B(s)}^2_{\cL_2(U,H)}\,ds + \int_0^t\left(u_\lambda(s), B(s)\right)_H\,dW_s
\end{split}\]
and by the Young inequality, the monotonicity of $\beta_\lambda$ and the Lipschitzianity of $\pi$ we deduce that
\[\begin{split}
  \frac12\norm{u_\lambda(t)}_H^2 &+ \frac12\int_0^t\norm{\Delta u_\lambda(s)}_H^2\,ds \leq
  \frac12\norm{u_0}_H^2 + \norm{g}^2_{L^2(0,T;H)} + 2T|D|C_0^2 \\
  &+2C_\pi^2\int_0^t\norm{u_\lambda(s)}_H^2\,ds +\frac12\int_0^T\norm{B(s)}^2_{\cL_2(U,H)}\,ds +
   \int_0^t\left(u_\lambda(s), B(s)\right)_H\,dW_s\,.
\end{split}\]
Thanks to the Gronwall lemma, 
taking supremum in time and expectations, using the Burkholder-Davis-Gundy inequality on the last terms on the
right-hand side and the Young inequality, we get, for any $\eps>0$,
\[\begin{split}
  \E\sup_{t\in[0,T]}\norm{u_\lambda(t)}_H^2 &+ \E\int_0^T\norm{u_\lambda(s)}_2^2\,ds \lesssim_{T,|D|, C_0} 1+
  \norm{u_0}_{L^2(\Omega;H)}^2 + \norm{g}^2_{L^2(\Omega; L^2(0,T;H))} \\
  & +\norm{B}^2_{L^2(\Omega; L^2(0,T;\cL_2(U,H)))} +
   \E\sup_{t\in[0,T]}\abs{\int_0^t\left(u_\lambda(s), B(s)\right)_H\,dW_s}\\
   &\lesssim 1+
  \norm{u_0}_{L^2(\Omega;H)}^2 + \norm{g}^2_{L^2(\Omega; L^2(0,T;H))} +\norm{B}^2_{L^2(\Omega; L^2(0,T;\cL_2(U,H)))}\\
  &\qquad+\E\left(\int_0^T\norm{u_\lambda(s)}_H^2\norm{B(s)}_{\cL_2(U,H)}^2\,ds\right)^{1/2}\\
  &\leq1+
  \norm{u_0}_{L^2(\Omega;H)}^2 + \norm{g}^2_{L^2(\Omega; L^2(0,T;H))} +\norm{B}^2_{L^2(\Omega; L^2(0,T;\cL_2(U,H)))}\\
  &\qquad+\eps\E\sup_{t\in[0,T]}\norm{u_\lambda(t)}_H^2 + \frac1{4\eps}\norm{B}^2_{L^2(\Omega; L^2(0,T;\cL_2(U,H)))}\,.
\end{split}\]
Choosing $\eps$ small enough, we deduce that there exists a constant $M$, independent of $\lambda$, such that
\beq
\label{est1}
  \norm{u_\lambda}_{L^2(\Omega; L^\infty(0,T; H))}^2 + \norm{u_\lambda}^2_{L^2(\Omega; L^2(0,T; V_2))}
  \leq M
  \qquad\forall\,\lambda\in(0,1]\,.
\eeq

\subsection{The second estimate}
\label{second_est}
Let us prove a further estimate on the approximated solutions. To this end, we need some preparatory work.
We introduce the convex function 
\[
  \Phi:H\to[0,+\infty)\,, \qquad \Phi(x):=\frac12\norm{\nabla{\mathcal N}(x-x_D)}_H^2\,, \quad x\in H\,.
\]
Let us show that the subdifferential of $\Phi$ is given by $\partial\Phi(x)=\mathcal N(x-x_D)$, for $x\in H$: 
indeed, by definition of $\mathcal N$, for every $x,y\in H$ we have
\[\begin{split}
  \frac12\norm{\nabla\mathcal{N}(x-x_D)}_H^2 &+ \left(\mathcal N(x-x_D), y-x\right)_H\\
  &=\frac12\norm{\nabla\mathcal N(x-x_D)}_H^2 + \left(\mathcal N(x-x_D), (y-y_D)-(x-x_D)\right)_H \\
  &=\frac12\norm{\nabla\mathcal N(x-x_D)}_H^2 + \int_D\nabla\mathcal N(x-x_D)\cdot\nabla\mathcal N\left((y-y_D)-(x-x_D)\right)\\
  &\leq\frac12\norm{\nabla\mathcal N(y-y_D)}_H^2\,,
\end{split}\]
so that $\mathcal N(x-x_D)\in\partial\Phi(x)$. Since $x\mapsto \mathcal N(x-x_D)$, $x\in H$, is maximal monotone on $H$, the claim
is proved. In particular, we have that $\Phi\in C^2(H)$.

Now, in order to get some weak compactness for $\beta_\lambda(u_\lambda)$, the idea is first
to recover an $L^1$ estimate for the product $\beta_\lambda(u_\lambda)u_\lambda$.
To this end, we should need an It\^o's formula for $\Phi(u_\lambda)$: however, this is not obvious,
since in the variational framework,
only It\^o's formula for the square of the $H$-norm is known. The idea is to smooth out \eqref{app_eq} through
the resolvent of the linear operator $\Delta^2$ in order to recover an equation in $H$, apply the classical It\^o's
formula for the function $\Phi$ on $H$ and then pass to the limit in the approximation.
A possible way to avoid this further approximation would be to look at 
\eqref{app_eq} as an equation on $V_1^*$, note that $\Phi$ can be extended
to $V_1^*$ and use It\^o's formula directly on $V_1^*$: however, we prefer here the first approach since it is more
constructive.

We shall use the notation $B_D$ for the operator
\[
  B_D:\Omega\times[0,T]\to \cL_2(U,\erre)\,, \qquad B_D(\omega,t)x:=\left(B(\omega,t)x\right)_D\,,
  \quad (\omega,t, x)\in \Omega\times[0,T]\times U
\]
and note that $\norm{B_D}_{\cL_2(U,\erre)}\leq \sqrt{|D|}\norm{B}_{\cL_2(U,H)}$
in $\Omega\times[0,T]$. Setting also
\[
  m(t):=(u_0)_D + \left(B\cdot W(t)\right)_D=(u_0)_D+\int_0^tB_D(s)\,dW_s\,, \qquad t\in[0,T]\,,
\]
since $B_D \in L^2(\Omega; L^2(0,T; \cL_2(U,\erre)))$, then we have $m \in L^2(\Omega; C^0([0,T]))$.
With this notation, it is a standard matter to check that \eqref{u0} and \eqref{add2} imply that 
\beq
  \label{int_m}
  j(\alpha m) \in L^1(\Omega\times(0,T)) \qquad\forall\,\alpha>0\,.
\eeq

First of all, note that testing \eqref{app_eq} by the constant $\frac1{|D|}$, we have that 
$(u_\lambda(t))_D=m(t)$ for every $t\in[0,T]$ and $\lambda \in (0,1]$.
Secondly, we consider for any $\delta>0$ the operator
\[
  (I+\delta\Delta^2)^{-1}:V_2^*\to V_2
\]
and note that, as $\delta\searrow0$, $(I+\delta\Delta^2)^{-1}x\to x$ in $V_2^*$ (respectively in $V_2$) for every $x\in V_2^*$ 
(respectively $x\in V_2$). Moreover, using the definition of resolvent, it is a standard matter to check that 
$(I+\delta\Delta^2)^{-1}$ and $-\Delta$ commute on $H$.
Hence, using the notation $h^\delta:=(I+\delta \Delta^2)^{-1}h$ for every $h\in V_2^*$, applying $(I+\delta\Delta^2)^{-1}$ to 
\eqref{app_eq} we get
\[\begin{split}
  u_\lambda^\delta(t) &+ \int_0^t\Delta^2u_\lambda^\delta(s)\,ds - \int_0^t\Delta(\beta_\lambda(u_\lambda(s)))^\delta\,ds
  -\int_0^t\Delta(\pi(u_\lambda(s)))^\delta\,ds-\int_0^t\Delta g^\delta(s)\,ds\\
  &=u_0^\delta + \int_0^tB^\delta(s)\,dW_s \qquad\text{in } H\,, \quad\forall\,t\in[0,T]\,, \quad\P\text{-a.s.}
\end{split}\]
Now, since $(I+\delta \Delta^2)^{-1}$ preserves the mean, we have $(u_\lambda^\delta)_D=m$.
Taking into account that the previous equation holds in $H$, the classical It\^o's formula for $\Phi(u_\lambda^\delta)$
(see \cite[Thm.~4.32]{dapratozab}) yields
\[\begin{split}
  &\frac12\norm{\nabla\mathcal N(u^\delta_\lambda(t)-m(t))}^2_H + 
  \int_0^t\left(\Delta^2u_\lambda^\delta(s), \mathcal N(u^\delta_\lambda(s)-m(s))\right)_H\,ds\\
  &-\int_0^t\left(\Delta(\beta_\lambda(u_\lambda(s)))^\delta, \mathcal N(u^\delta_\lambda(s)-m(s))\right)_H\,ds\\
  &-\int_0^t\left(\Delta(\pi(u_\lambda(s)))^\delta, \mathcal N(u^\delta_\lambda(s)-m(s))\right)_H\,ds
  -\int_0^t\left(\Delta g^\delta(s), \mathcal N(u^\delta_\lambda(s)-m(s))\right)_H\,ds\\
  &=\frac12\norm{\nabla \mathcal N(u^\delta_0-(u^\delta_0)_D)}_H^2
  +\frac12\int_0^t\operatorname{Tr}\left[(B^\delta)^*(s)\mathcal N(\cdot-(\cdot)_D)B^\delta(s)\right]\,ds\\
  &+\int_0^t\left(\mathcal N(u^\delta_\lambda(s)-m(s)), B^\delta(s)\right)_H\,dW_s \qquad\forall\,t\in[0,T]\,, \quad\P\text{-a.s.}
\end{split}\]
Now, by the properties of $\norm\cdot_*$ and the continuous inclusion $H\embed V_1^*$, it is not difficult to check that
\[
  \norm{\nabla \mathcal N(u^\delta_0-(u^\delta_0)_D)}_H^2=
  \norm{u^\delta_0-(u^\delta_0)_D)}_{*}^2\lesssim \norm{u_0^\delta}_H^2\leq \norm{u_0}_H^2\,.
\]
Moreover, if $\{e_k\}_{k\in\enne}$ is an orthonormal system of $U$, 
again by definition of $\mathcal N$, condition \eqref{N1} and the continuous inclusion $H\embed V_1^*$ we have
\[\begin{split}
\operatorname{Tr}&\left[(B^\delta)^*\mathcal N(\cdot-(\cdot)_D)B^\delta\right]=
\sum_{k=0}^\infty\left(\mathcal N(B^\delta e_k-(B^\delta e_k)_D), B^\delta e_k\right)_H\\
&=\sum_{k=0}^\infty\left(\mathcal N(B^\delta e_k-(B^\delta e_k)_D), (B^\delta e_k-(B^\delta e_k)_D) \right)_H=
\sum_{k=0}^\infty\norm{\nabla\mathcal N(B^\delta e_k-(B^\delta e_k)_D)}^2_H\\
&=\sum_{k=0}^\infty\norm{(B^\delta e_k-(B^\delta e_k)_D)}^2_{*}\lesssim
\sum_{k=0}^\infty\norm{B^\delta e_k}_H^2\leq\sum_{k=0}^\infty\norm{B e_k}_H^2=\norm{B}^2_{\cL_2(U,H)}\,.
\end{split}\]
Finally, by the Burkholder-Davis-Gundy,  the Young inequalities, the continuous inclusion $H\embed V_1^*$ and
the fact that $\mathcal N(u_\delta-m)$ has null mean, we have
\[\begin{split}
\E&\sup_{t\in[0,T]}\abs{\int_0^t\left(\mathcal N(u^\delta_\lambda(s)-m(s)), B^\delta(s)\right)_H\,dW_s}\\
&\lesssim
\E\left(\int_0^T\norm{\mathcal N(u^\delta_\lambda(s)-m(s))}_{V_1}^2\norm{B^\delta(s)}_{\cL_2(U,V_1^*)}^2\,ds\right)^{1/2}\\
&\lesssim\eps\E\sup_{t\in[0,T]}\norm{\nabla\mathcal N(u^\delta_\lambda(t)-m(t))}_H^2 + 
\frac1{4\eps}\norm{B}^2_{L^2(\Omega; L^2(0,T; \cL_2(U,H)))}
\end{split}\]
for every $\eps>0$. Choosing $\eps$ small enough and taking these estimates into account we deduce that
\[\begin{split}
  \E&\sup_{t\in[0,T]}\norm{\mathcal N(u^\delta_\lambda(t)-m(t))}_H^2+
  \E\int_0^T\left(\Delta^2u_\lambda^\delta(s), \mathcal N(u^\delta_\lambda(s)-m(s))\right)_H\,ds\\
  &-\E\int_0^T\left(\Delta(\beta_\lambda(u_\lambda(s)))^\delta, \mathcal N(u^\delta_\lambda(s)-m(s))\right)_H\,ds\\
  &-\E\int_0^T\left(\Delta(\pi(u_\lambda(s)))^\delta, \mathcal N(u^\delta_\lambda(s)-m(s))\right)_H\,ds\\
  &-\E\int_0^T\left(\Delta g^\delta(s), \mathcal N(u^\delta_\lambda(s)-m(s))\right)_H\,ds
  \lesssim \norm{u_0}^2_{L^2(\Omega; H)} + \norm{B}^2_{L^2(\Omega; L^2(0,T; \cL_2(U,H)))}\,,
\end{split}\]
where the implicit constant is independent of $\delta$ and $\lambda$. Now, by the properties of the resolvent
$(I+\delta\Delta^2)^{-1}$, the continuity of $-\Delta:H\to V_2^*$ 
and the regularities of $u_\lambda$, $\beta_\lambda(u_\lambda)$, 
$\pi(u_\lambda)$ and
$g$ we have, as $\delta\searrow0$,
\begin{align*}
  u_\lambda^\delta\to u_\lambda \qquad&\text{in } L^2(\Omega; L^2(0,T; V_2))\,,\\
  \Delta^2u_\lambda^\delta\to \Delta^2u_\lambda \qquad&\text{in } L^2(\Omega; L^2(0,T; V_2^*))\,,\\
  -\Delta(\beta_\lambda(u_\lambda))^\delta \to 
  -\Delta\beta_\lambda(u_\lambda) \qquad&\text{in } L^2(\Omega; L^2(0,T; V_2^*))\,,\\
  -\Delta(\pi(u_\lambda))^\delta\to -\Delta\pi(u_\lambda) \qquad&\text{in } L^2(\Omega; L^2(0,T; V_2^*))\,,\\
  -\Delta g^\delta \to -\Delta g \qquad&\text{in } L^2(\Omega; L^2(0,T; V_2^*))\,.
\end{align*}
In particular we have that 
\[
  u_\lambda^\delta-m\to u_\lambda-m \qquad\text{in } L^2(\Omega; L^2(0,T; H_0))
\]
and consequently, by the continuity of $\mathcal N:H_0\to V_2$,
\[
  \mathcal N(u_\lambda^\delta-m) \to \mathcal N(u_\lambda-m)\qquad\text{in } L^2(\Omega; L^2(0,T; V_2))\,.
\]
Taking these remarks into account, 
letting $\delta\searrow0$ and ignoring the first term on the left-hand side, we deduce that 
\[\begin{split}
  &\E\int_0^T\sp{\Delta^2u_\lambda(s)}{\mathcal N(u_\lambda(s)-m(s))}_{V_2}\,ds
  -\E\int_0^T\sp{\Delta\beta_\lambda(u_\lambda(s))}{\mathcal N(u_\lambda(s)-m(s))}_{V_2}\,ds\\
  &-\E\int_0^T\sp{\Delta\pi(u_\lambda(s))}{\mathcal N(u_\lambda(s)-m(s))}_{V_2}\,ds
  -\E\int_0^T\sp{\Delta g(s)}{\mathcal N(u_\lambda(s)-m(s))}_{V_2}\,ds\\
  &\lesssim \norm{u_0}^2_{L^2(\Omega; H)} + \norm{B}^2_{L^2(\Omega; L^2(0,T; \cL_2(U,H)))}\,,
\end{split}\] 
which by the definition of $\mathcal N$ leads to
\[\begin{split}
  \E&\int_0^T\left(-\Delta u_\lambda(s)+\beta_\lambda(u_\lambda(s))+\pi(u_\lambda(s))+g(s), u_\lambda(s)-m(s)\right)_H\,ds\\
  &\lesssim \norm{u_0}^2_{L^2(\Omega; H)} + \norm{B}^2_{L^2(\Omega; L^2(0,T; \cL_2(U,H)))}\,.
\end{split}\]
By the Burkholder-Davis-Gundy inequality we have
\[\begin{split}
  \E\sup_{t\in[0,T]}|m(t)|^2 &\lesssim \norm{u_0}_{L^2(\Omega; H)}^2+\E\int_0^T\norm{B_D(s)}^2_{\cL_2(U,\erre)}\,ds\\
  &\lesssim_{|D|}\norm{u_0}_{L^2(\Omega; H)}^2 + \norm{B}^2_{L^2(\Omega; L^2(0,T; \cL_2(U,H)))}\,,
\end{split}\]
hence, integrating by parts and using the Lipschitz continuity of $\pi$, the Young inequality and the estimate \eqref{est1},
we deduce that
\beq\label{aux_ineq}\begin{split}
  \E\int_0^T&\norm{\nabla u_\lambda(s)}^2_H\,ds + \E\int_0^T\!\!\int_D\beta_\lambda(u_\lambda(s))u_\lambda(s)\,ds
  \lesssim_{C_\pi, C_0, |D|} 1+\norm{u_0}^2_{L^2(\Omega; H)} \\
  &+ \norm{B}^2_{L^2(\Omega; L^2(0,T; \cL_2(U,H)))} + 
  \norm{g}^2_{L^2(\Omega; L^2(0,T; H))}
  +\E\int_0^T\!\!\int_D\beta_\lambda(u_\lambda(s))m(s)\,ds\,.
\end{split}\eeq
Now, for every $\alpha>1$,
we estimate the last term on the right-hand side using the generalized Young inequality and \eqref{int_m} as
\[
  \beta_\lambda(u_\lambda)m=\frac1\alpha\beta_\lambda(u_\lambda)\alpha m\leq
  \frac1\alpha j^*(\beta_\lambda(u_\lambda)) + j(\alpha m)\,,
\]
and similarly, recalling that $\beta_\lambda \in \beta((I+\lambda\beta)^{-1})$, on the left-hand side
\[
  \beta_\lambda(u_\lambda)u_\lambda = j((I+\lambda\beta)^{-1}u_\lambda) + j^*(\beta_\lambda(u_\lambda))
  \geq j^*(\beta_\lambda(u_\lambda))\,.
\]
Rearranging the terms, we deduce that 
\[
  \E\int_0^T\!\!\int_Dj^*(\beta_\lambda(u_\lambda))\lesssim
  1+\norm{j(\alpha m)}_{L^1(\Omega\times(0,T))} + \frac{1}{\alpha}\E\int_0^T\!\!\int_D j^*(\beta_\lambda(u_\lambda))\,,
\]
where the implicit constant depends only on $u_0,B,g,\pi,D$.
Since $\alpha>1$ is arbitrary, choosing $\alpha$ sufficiently large we deduce
by condition \eqref{int_m} that $j^*(\beta_\lambda(u_\lambda))$
is bounded in $L^1(\Omega\times(0,T)\times D)$ uniformly in $\lambda$. Hence, going back in the inequality \eqref{aux_ineq} we 
infer that there exists a constant $M>0$ such that
\beq
  \label{est2}
  \norm{\beta_\lambda(u_\lambda)u_\lambda}_{L^1(\Omega\times(0,T)\times D)} \leq M \qquad\forall\,\lambda\in(0,1]\,.
\eeq

\subsection{The third estimate}
We prove now pathwise estimates on the approximated solutions (i.e.~with $\omega\in\Omega$ fixed).
To this end, note that the integral relation \eqref{app_eq} can be written as
\beq
  \label{app_eq'}
  \partial_t(u_\lambda - B\cdot W) + \Delta^2 u_\lambda -
  \Delta\left(\beta_\lambda(u_\lambda) + \pi(u_\lambda) + g\right) = 0 \quad\text{a.e.~in } (0,T)\,, \quad\P\text{-a.s.}
\eeq
Thanks to the further regularity that we have assumed on $B$ in \eqref{strong_B}, we have that 
\[
  B\cdot W \in L^2(\Omega; C^0([0,T]; \bar V))\,,
\]
so that, taking also into account \eqref{u0} and the definition of $\bar{V}$,
we can choose a set $\Omega_0\in\cF$ with $\P(\Omega_0)=1$ such that \eqref{app_eq'} holds in $\Omega_0$
and we have, for every $\omega\in\Omega_0$, that 
\begin{gather*}
  (B\cdot W)(\omega) \in L^2(0,T; V_2)\cap L^\infty(0,T; W^{2,\infty}(D))\,,\\
  u_0(\omega)\in H\,, \qquad j(\alpha (u_0(\omega))_D)<+\infty \quad\forall\,\alpha>0\,.
\end{gather*}

Fix now $\omega\in \Omega_0$: we will not write the dependence on $\omega$ as no confusion can arise.
Since $(u_\lambda-B\cdot W)_D=(u_0)_D$, 
setting $m_0:=(u_0)_D + B\cdot W$
we can test \eqref{app_eq'} by $\mathcal N(u_\lambda-m_0)$:
taking into account the definition of $\mathcal N$ and \eqref{N3}, we have
\[\begin{split}
  \frac12&\norm{\nabla\mathcal N(u_\lambda-m_0)(t)}_H^2
  -\int_0^t\!\!\int_D\Delta u_\lambda(s)(u_\lambda-m_0)(s)\,ds\\
  &+\int_0^t\!\!\int_D\left[\beta_\lambda(u_\lambda(s))
  +\pi(u_\lambda(s))+g(s)\right](u_\lambda-m_0)(s)\,ds = \frac12\norm{\nabla \mathcal N(u_0-(u_0)_D)}^2_H
\end{split}\]
for every $t\in[0,T]$. Integrating by parts, rearranging the terms using the Young inequality 
and \eqref{N2}, we deduce that 
\[\begin{split}
  \frac12&\norm{(u_\lambda-m_0)(t)}_*^2 + \int_0^t\norm{\nabla u_\lambda(s)}^2_H\,ds
  +\int_0^t\!\!\int_D\beta_\lambda(u_\lambda(s))u_\lambda(s)\,ds\\
  &\leq \frac12\norm{u_0}_*^2+ \int_0^t\!\!\int_D\nabla u_\lambda(s)\cdot\nabla (B\cdot W)(s)\,ds+
  \int_0^t\!\!\int_D\beta_\lambda(u_\lambda(s))m_0(s)\,ds\\
  &\qquad
  +\frac12\int_0^t\norm{(u_\lambda-m_0)(s)}_H^2\,ds +2C_\pi^2\int_0^t\norm{u_\lambda(s)}^2_H\,ds
  + \norm{g}^2_{L^2(0,T;H)} + 2C_0^2T|D|\\
  &\leq\frac12\norm{u_0}_*^2+
  \frac12\int_0^t\norm{\nabla u_\lambda(s)}^2_H\,ds + \frac12\int_0^t\norm{\nabla(B\cdot W)(s)}^2_H\,ds \\
  &\qquad+\frac1\alpha\int_0^t\!\!\int_Dj^*(\beta_\lambda(u_\lambda(s)))\,ds
  +\int_0^t\!\!\int_Dj(\alpha m_0(s))\,ds
  + \norm{g}^2_{L^2(0,T;H)} + 2C_0^2T|D|\\
  &\qquad+ 4C_\pi^2 \int_0^T\norm{m_0(s)}^2_H\,ds
  +\eps\int_0^t\norm{\nabla (u_\lambda-m_0)(s)}_H^2\,ds + C_\eps\int_0^t\norm{(u_\lambda-m_0)(s)}^2_*\,ds
\end{split}\]
for every $t\in[0,T]$,  $\alpha>1$ and $\eps>0$, for a certain $C_\eps>0$. Now, since 
\[
j^*(\beta_\lambda(u_\lambda))\leq j^*(\beta_\lambda(u_\lambda)) + j((I+\lambda\beta)^{-1}u_\lambda)
=\beta_\lambda(u_\lambda)(I+\lambda\beta)^{-1}u_\lambda\leq\beta_\lambda(u_\lambda)u_\lambda\,,
\]
choosing $\alpha=2$,  $\eps=\frac18$ and the Young inequality yields
\[\begin{split}
  \frac12&\norm{(u_\lambda-m_0)(t)}_*^2 + \frac14\int_0^t\norm{\nabla u_\lambda(s)}^2_H\,ds 
  +\frac12\int_0^t\!\!\int_Dj^*(\beta_\lambda(u_\lambda(s)))\,ds\\
  &\leq\frac12\norm{u_0}_*^2+
  \frac12\norm{\nabla(B\cdot W)}^2_{L^2(0,T; H)} + \int_0^T\!\!\int_Dj(2m_0) + \norm{g}^2_{L^2(0,T; H)} + 2C_0^2T|D|\\
  &\qquad\quad+4C_\pi^2\norm{m_0}^2_{L^2(0,T; H)} + \frac14\norm{\nabla m_0}^2_{L^2(0,T; H)}
  +C\int_0^t\norm{(u_\lambda-m_0)(s)}^2_*\,ds\,.
\end{split}\]
Now, since $\omega\in \Omega_0$, we have that $m_0\in L^2(0,T; V_1)$, $B\cdot W\in L^2(0,T; V_1)$,
as well as
\[
  j(2m_0)=j(2(u_0)_D + 2(B\cdot W))\leq\frac12 j(4(u_0)_D) + \frac12j(4B\cdot W)\,,
\]
where $j(4(u_0)_D)(\omega)<+\infty$ by choice of $\omega\in\Omega_0$ and 
$j(4 B\cdot W)\leq j(4\norm{B\cdot W}_{L^\infty((0,T)\times D)})$ is finite because 
$B\cdot W(\omega) \in L^\infty(0,T; \bar V)\embed L^\infty(0,T; L^\infty(D))$ and $j$
is continuous on $\erre$.
Consequently, for any $\omega\in\Omega_0$, 
the first four terms on the right-hand side are finite. The Gronwall lemma then implies that 
for every $\omega\in\Omega_0$, there is a positive constant $M_\omega>0$ such that 
\beq\label{est1'}
  \norm{j^*(\beta_\lambda(u_\lambda(\omega)))}_{L^1((0,T)\times D)}\leq M_\omega \qquad\forall\,\lambda\in(0,1]\,,
\eeq
hence also, by substitution, that
\beq
  \label{est1''}
  \norm{\beta_\lambda(u_\lambda(\omega))u_\lambda(\omega)}_{L^1((0,T)\times D)}\leq M_\omega
  \qquad\forall\,\lambda\in(0,1]\,.
\eeq

Moreover, testing equation \eqref{app_eq'} by $u_\lambda-B\cdot W$, we get by the Young inequality that 
\[\begin{split}
  \frac12&\norm{(u_\lambda-B\cdot W)(t)}^2_H + \frac12\int_0^t\norm{\Delta u_\lambda(s)}_H^2\,ds+
  \int_0^t\!\!\int_D\beta'_\lambda(u_\lambda(s))|\nabla u_\lambda(s)|^2\,ds\\
  &\leq\frac12\norm{u_0}_H^2+
  \frac12\norm{\Delta(B\cdot W)}^2_{L^2(0,T; H)}+ \int_0^t\!\!\int_D\beta_\lambda(u_\lambda(s))\Delta(B\cdot W)(s)\,ds
  + \frac12\norm{g}^2_{L^2(0,T;H)}\\
  &\quad+(1+C_\pi^2)\int_0^t\norm{u_\lambda(s)-B\cdot W(s)}_H^2\,ds+2C_\pi^2\norm{B\cdot W}^2_{L^2(0,T; H)}
  +2C_0^2T|D|\\
  &\lesssim1+\norm{u_0}_H^2+
  \norm{B\cdot W}^2_{L^2(0,T; V_2)} + \norm{g}^2_{L^2(0,T;H)}\\
  &\quad+\int_0^T\!\!\int_Dj^*(\beta_\lambda(u_\lambda))
  +\int_0^T\!\!\int_Dj(\Delta(B\cdot W))+\int_0^t\norm{u_\lambda(s)-B\cdot W(s)}_H^2\,ds
\end{split}\]
for every $t\in[0,T]$. Recalling that $B\cdot W(\omega) \in L^\infty(0,T; \bar V)\embed L^\infty(0,T; W^{2,\infty}(D))$
implies $j(\Delta(B\cdot W))\in L^1((0,T)\times D)$ by continuity of $j$,
thanks also to the estimate \eqref{est1'} and the Gronwall lemma
we deduce that for every $\omega\in \Omega_0$, there is a positive constant $M_\omega$
such that 
\beq
  \label{est2'}
  \norm{u_\lambda(\omega)}_{L^\infty(0,T; H)}^2 + \norm{u_\lambda(\omega)}^2_{L^2(0,T; V_2)}\leq M_\omega
  \qquad\forall\,\lambda\in(0,1]\,.
\eeq

Finally, going back to \eqref{app_eq'}, 
since $-\Delta:L^1(D)\to \bar V_2^*$ is continuous, 
using \eqref{est1'}--\eqref{est2'} it is not difficult to check that, for $\omega\in\Omega_0$, 
the last two terms on the left-hand side
are bounded in $L^1(0,T; \bar V_2^*)$ uniformly in $\lambda$. By difference we deduce that
for every $\omega\in \Omega_0$, there is a positive constant $M_\omega$
such that 
\beq
  \label{est3'}
  \norm{\partial_t(u_\lambda-B\cdot W)(\omega)}_{L^1(0,T; \bar V_2^*)}\leq M_\omega
  \qquad\forall\,\lambda\in(0,1]\,.
\eeq

\subsection{The passage to the limit}
We pass now to the limit as $\lambda\searrow0$ in the approximated problem \eqref{eq1_app}--\eqref{eq4_app}.

Fix $\omega\in\Omega_0$: by \eqref{est2'},
there is a subsequence $\lambda'=\lambda'(\omega)$ with $\lambda'\to0$ such that
\begin{align}
   \label{conv1}
  u_{\lambda'}(\omega) \wstarto u(\omega) \qquad&\text{in } L^\infty(0,T; H)\,,\\
  \label{conv2}
  u_{\lambda'}(\omega) \wto u(\omega) \qquad&\text{in } L^2(0,T; V_2)\,.
\end{align}
Furthermore,
recalling that $j^*$ is superlinear because $D(\beta)=\erre$, by \eqref{est1'} we infer by the de la Vall\'ee Poussin criterion that 
the family $\{\beta_\lambda(u_\lambda(\omega))\}_\lambda$ is uniformly integrable on $(0,T)\times D$, hence also
relatively weakly compact by the
Dunford-Pettis theorem. We can suppose then with no restriction that 
\beq
  \label{conv3}
  \beta_{\lambda'}(u_{\lambda'}(\omega)) \wto \xi(\omega) \qquad\text{in } L^1((0,T)\times D)\,.
\eeq

Let us show now also a strong convergence for $u_\lambda(\omega)$. 
Since $B\cdot W(\omega)\in L^2(0,T; V_2)$, by \eqref{est2'} we also have that 
$(u_\lambda-B\cdot W)(\omega)$ is bounded in $L^2(0,T; V_2)$;
this fact, together with \eqref{est3'} and the compact inclusion $V_2\cembed V_1$, 
implies by the classical Aubin-Lions theorems on compactness (see \cite[Cor.~4, p.~85]{simon}) that the family
$\{(u_\lambda-B\cdot W)(\omega)\}_\lambda$ is relatively compact in $L^2(0,T; V_1)$. Since $B\cdot W$ is fixed, we can 
assume with no restriction also that 
\beq
  \label{conv4}
  u_{\lambda'}(\omega)\to u(\omega) \qquad\text{in } L^2(0,T; V_1)\,.
\eeq

Take now a test function $\varphi\in \bar V_2$ and fix $\omega\in \Omega_0$:
we will not write 
explicitly the dependence on $\omega$
in this section.
For every $t\in[0,T]$,
by \eqref{conv1}--\eqref{conv4}, using the definition of $A_\lambda$ and the Lipschitzianity of $\pi$ it is immediate to check that
\[\begin{split}
  \int_0^t&\sp{A_{\lambda'}(s,u_{\lambda'}(s))}{\varphi}_{V_2}\,ds\\
  &=\int_0^t\!\!\int_D\Delta u_{\lambda'}(s)\Delta\varphi\,ds - 
  \int_0^t\!\!\int_D\left(\beta_{\lambda'}(u_{\lambda'}(s)) + \pi(u_{\lambda'}(s)) + g(s)\right)\Delta\varphi\,ds\\
  &\to\int_0^t\!\!\int_D\Delta u(s)\Delta\varphi\,ds - 
  \int_0^t\!\!\int_D\left(\xi(s) + \pi(u(s)) + g(s)\right)\Delta\varphi\,ds\,.
\end{split}\]
By difference in \eqref{app_eq}, we have that $u_{\lambda'}(t)$ converges weakly star in $\bar V_2^*$ for every $t\in[0,T]$: 
since $u_\lambda(\omega)$ is bounded in $L^\infty(0,T;H)$ we deduce that
\[
  u_{\lambda'}(t) \wto u(t) \qquad\text{in } H\,, \quad\forall\, t\in[0,T]\,.
\]
Consequently, letting $\lambda'\searrow0$ in \eqref{app_eq} we have that 
\[
  u(t) + \int_0^t\Delta^2u(s)\,ds - \int_0^t\Delta\xi(s)\,ds - \int_0^t\Delta\pi(u(s))\,ds - \int_0^t\Delta g(s)\,ds
  =u_0 + \int_0^tB(s)\,dW_s
\]
for every $t\in[0,T]$, in $\Omega_0$. Since $\P(\Omega_0)=1$, the previous relation holds $\P$-almost surely.

Furthermore, by the two convergences \eqref{conv3}--\eqref{conv4}, we can apply a generalized result on the strong-weak
closure of maximal monotone graphs due to Br\'ezis (see \cite[Thm.~18, p.~126]{brezis-monot}) to infer that 
\[
  \xi(\omega) \in \beta(u(\omega)) \quad\text{a.e.~in } (0,T)\times D\,, \quad\forall\,\omega\in\Omega_0\,.
\]
Moreover, by the weak lower semicontinuity of the convex integrands and the fact that
$j((I+\lambda\beta)^{-1}x)+j^*(\beta_\lambda(x))=(I+\lambda\beta)^{-1}x\beta_\lambda(x)$ for every $x\in\erre$, 
for $\omega\in\Omega_0$
we have by \eqref{est1''} that
\[\begin{split}
  \int_0^T\!\!\int_D\left(j(u)+ j^*(\xi)\right) &\leq\liminf_{\lambda'\searrow0}\int_0^T\!\!\int_D\left(j((I+\lambda'\beta)^{-1}u_{\lambda'}) + 
  j^*(\beta_{\lambda'}(u_{\lambda'}))\right)\\
  &= \liminf_{\lambda'\searrow0}\int_0^T\!\!\int_D\beta_{\lambda'}(u_{\lambda'})(I+\lambda'\beta)^{-1}u_{\lambda'}\leq
  \int_0^T\!\!\int_D\beta_{\lambda'}(u_{\lambda'})u_{\lambda'}\leq M_\omega\,,
\end{split}\]
so that $j(u) + j^*(\xi) \in L^1((0,T)\times D)$ $\P$-almost surely.

Finally, it is clear from the limit equation and the regularities of $u$ and $\xi$ that the 
trajectories of $u$ are in $C^0([0,T]; V_2^*)$ $\P$-almost surely; since 
we know that $u \in L^\infty(0,T; H)$, it follows that 
$u \in C^0([0,T]; V_1^*)$ $\P$-almost surely as well.

\subsection{Measurability and integrability of the limit processes}
\label{meas}
The processes $u$ and $\xi$ constructed above not not have any obvious measurability property in $\Omega$,
because of the way they have been built: indeed, the subsequence $\lambda'$ depends on $\omega$ (not in
a measurable way  in general). We prove here that $u$ and $\xi$ are actually measurable in some sense
and suitably integrable on $\Omega$ as well.

The first step in to prove a pathwise uniqueness result for the problem: in other words, we prove that the two processes 
$u$ and $\xi-\xi_D$ are unique whenever $(u,\xi)$ solves the limit equation.
More in detail, let $(u_i,\xi_i)$, $i=1,2$, be two pair of processes such that, for $i=1,2$,
\begin{gather*}
u_i\in L^\infty(0,T; H)\cap L^2(0,T; V_2)\,, \qquad \xi \in L^1((0,T)\times D)\,,\\
\xi_i\in\beta(u_i) \quad\text{a.e.~in } (0,T)\times D\,, \qquad j(u_i)+j^*(\xi_i)\in L^1((0,T)\times D)
\end{gather*}
$\P$-almost surely, and
\[
u_i(t) + \int_0^t\Delta^2u_i(s)\,ds - \int_0^t\Delta\left(\xi_i(s)+\pi(u_i(s))+g(s)\right)\,ds
=u_0 + \int_0^tB(s)\,dW_s
\]
for every $t\in[0,T]$, $\P$-almost surely: we show that, setting $\bar u:=u_1-u_2$ and $\bar\xi:=\xi_1-\xi_2$, then
$\bar u=0$ and $\bar\xi-\bar\xi_D=0$. First of all, by difference in the limit equations we have
\[
\bar u(t) + \int_0^t\Delta^2\bar u(s)\,ds - \int_0^t\Delta\bar\xi(s)\,ds - \int_0^t\Delta(\pi(u_1(s))-\pi(u_2(s)))\,ds=0
\quad\forall\,t\in[0,T]\,, \quad\P\text{-a.s.}
\]
We would like to proceed now as in Subsection~\ref{second_est}: 
however, the resolvent $(I+\delta\Delta^2)^{-1}$
is not a contraction on $L^1$, and this may cause some problems 
when letting $\delta\searrow0$ in the term with $\bar\xi$.
Consequently,
we apply for any $\delta\in(0,1)$
the operator $(I-\delta\Delta)^{-k}$ which contracts also in $L^1$,
commutes with $-\Delta$ and $\Delta^2$, and maps $L^1(D)$ into $V_2$
for $k$ sufficiently large (being fixed) by the Sobolev embeddings theorems. 
Consequently, using the superscript $\delta$ to denote
the action of $(I-\delta\Delta)^{-k}$, we deduce
\[
  \bar u^\delta(t)+ \int_0^t\Delta^2\bar u^\delta(s)\,ds - \int_0^t\Delta\bar\xi^\delta(s)\,ds 
  - \int_0^t\Delta(\pi(u_1(s))-\pi(u_2(s)))^\delta\,ds=0
\]
for every $t\in[0,T]$, $\P$-almost surely. Fix now $\omega$: 
the previous equation can be written as
\[
\partial_t \bar u^\delta + \Delta^2\bar u^\delta -\Delta\bar\xi^\delta -\Delta(\pi(u_1)-\pi(u_2))^\delta=0 \quad\text{in } H\,,
\quad\text{for a.e.~}t\in(0,T)\,.
\]
Testing by $\frac{1}{|D|}$, it is clear that $(\bar u)_D=0$, and hence also $(\bar u^\delta)_D=0$ since $(I-\delta\Delta)^{-1}$ is 
mean-preserving. Consequently, we can test the previous equation by $\mathcal N\bar u^\delta$: recalling
\eqref{N2}--\eqref{N3} we infer that
\[\begin{split}
  \frac12\norm{\nabla\mathcal N\bar u^\delta(t)}_H^2 &+
  \int_0^t\norm{\nabla  \bar u^\delta(s)}^2_H\,ds+
  \int_0^t\!\!\int_D\bar\xi^\delta(s)\bar u^\delta(s)\,ds=\\
  &-\int_0^t\!\!\int_D\left(\pi(u_1(s))-\pi(u_2(s))\right)^\delta\bar u^\delta(s)\,ds
  \leq  C_\pi\int_0^t\norm{\bar u(s)}_H\norm{\bar u^\delta(s)}_H\,ds\\
  &\leq\int_0^t\norm{\bar u(s)}_H^2\,ds + 
  \eps\int_0^t\norm{\nabla\bar u^\delta(s)}^2_H\,ds + C_\eps\int_0^t\norm{\nabla\mathcal N\bar u^\delta(s)}_H^2\,ds
\end{split}\]
for any $\eps>0$ and a a positive constant $C_\eps$.
Choosing $\eps$ small enough, rearranging the terms and using the Gronwall lemma yields
\[
  \norm{\nabla\mathcal N\bar u^\delta(t)}_H^2 +\int_0^t\norm{\nabla  \bar u^\delta(s)}^2_H\,ds +
  \int_0^t\!\!\int_D\bar\xi^\delta(s)\bar u^\delta(s)\,ds \lesssim \int_0^t\norm{\bar u(s)}_H^2\,ds
  \qquad\forall\,t\in[0,T]\,.
\]
We want to let $\delta\searrow0$. Since $\bar u(t)\in V_1^*$ and $\bar u\in L^2(0,t; V_2)$, 
by the contraction properties of the resolvent operator on the spaces $V_1^*$ and $V_1$
it is clear that $\norm{\nabla\mathcal N\bar u^\delta(t)}_H^2\to\norm{\nabla\mathcal N\bar u(t)}_H^2$ 
and $\int_0^t\norm{\nabla \bar u^\delta(s)}_H^2\,ds \to
\int_0^t\norm{\nabla \bar u(s)}_H^2\,ds$.
Moreover, since $\bar \xi^\delta\to\xi$ in $L^1((0,T)\times D)$ and $\bar u^\delta\to\bar u$ in $L^2(0,T; H)$, we can assume
with no restrictions that $\bar \xi^\delta\to\bar\xi$ and $\bar u^\delta\to\bar u$ almost everywhere in $(0,T)\times D$.
We show that $\bar\xi^\delta \bar u^\delta\to \bar\xi \bar u$ in $L^1((0,T)\times D)$ using Vitali's convergence theorem, i.e.~proving
that the family $\{\bar\xi^\delta \bar u^\delta\}_\delta$ is uniformly integrable on $(0,T)\times D$. To this end,
let us recall that by the symmetry of $j$ an easy computation implies that 
\[
  \forall\, v\in L^1(D):\; j^*(v) \in L^1(D) \qquad\exists\,\eta>0:\; j^*(\eta|v|) \in L^1(D)\,.
\]
Since $j^*(\xi_2) \in L^1((0,T)\times D)$, let $\eta\in(0,1)$ such that $j^*(\eta|\xi_2|) \in L^1((0,T)\times D)$.
By the generalized Young inequality, the symmetry of $j$ and the generalized Jensen inequality for positive operators
(see \cite{haase}) we have
\[\begin{split}
  \pm\frac\eta4\bar\xi^\delta \bar u^\delta&\leq j\left(\pm\frac12\bar u^\delta \right) + j^*\left(\frac\eta2\bar\xi^\delta\right)\lesssim
  \frac12\left(j(u_1^\delta) + j(u_2^\delta) + j^*(\xi_1^\delta) + j^*(-\eta\xi_2^\delta)\right)\\
  &\leq\frac12(I-\delta\Delta)^{-k}\left(j(u_1) + j(u_2) + j^*(\xi_1) + j^*(\eta|\xi_2|)\right)\,.
\end{split}\]
Since the sum of the four terms in brackets on the right-hand side is in $L^1((0,T)\times D)$, by the contraction properties of 
$(I-\delta\Delta)^{-k}$ on $L^1(D)$ we have that the right-hand side converges in $L^1((0,T)\times D)$.
This implies that $\{\bar\xi^\delta \bar u^\delta\}_\delta$ is uniformly integrable on $(0,T)\times D$,
hence by Vitali's theorem that $\bar\xi^\delta \bar u^\delta\to \bar\xi \bar u$ in $L^1((0,T)\times D)$.
Letting then $\delta\searrow0$, we deduce again by \eqref{N2} that
\[\begin{split}
   \norm{\nabla \mathcal N\bar u(t)}_H^2 &+\int_0^t\norm{\nabla  \bar u(s)}^2_H\,ds 
   + \int_0^t\!\!\int_D\bar\xi(s)\bar u(s)\,ds \\
   &\lesssim
   \eps\int_0^t\norm{\nabla \bar u(s)}_H^2\,ds + C_\eps\int_0^t\norm{\nabla \mathcal N\bar u(s)}_H^2\,ds
  \qquad\forall\,t\in[0,T]\,
\end{split}\]
for every $\eps>0$ and a certain $C_\eps>0$.
Choosing $\eps$ small enough, by the monotonicity of $\beta$ and the Gronwall lemma
this readily implies that $\bar u=0$. Furthermore, by substitution in the equation itself,
we have that 
\[
  -\int_0^t\Delta\bar\xi(s)\,ds = 0 \qquad\forall\,t\in[0,T]\,,
\]
which implies that $-\Delta\bar\xi=0$. Hence, 
we have that $\bar\xi$ is constant in $D$, or, equivalently,
$\bar\xi-\bar\xi_D=0$. 
This proves that the processes $u$ and $\xi-\xi_D$ constructed path-by-path in the previous section
are indeed unique in the sense specified above.

The second step to prove measurability is to show that actually $\lambda'$ does not depend on $\omega\in\Omega_0$.
Indeed, from any subsequence of $\lambda$ we can extract a further subsequence $\lambda'$,
depending on $\omega$, such that all the convergences \eqref{conv1}--\eqref{conv4} hold,
hence also in particular $\beta_{\lambda'}(u_\lambda')-(\beta_{\lambda'}(u_\lambda'))_D$ 
converges weakly in $L^1((0,T)\times D)$.
Since we have just proved that the processes $u$ and $\xi-\xi_D$ are unique, 
by an elementary result of classical analysis we deduce that the convergences \eqref{conv1}--\eqref{conv2}
and \eqref{conv4} hold along the entire sequence $\lambda$, as well as
\[
\beta_{\lambda'}(u_\lambda')-(\beta_{\lambda}(u_\lambda))_D \wto \xi-\xi_D \qquad\text{in } L^1((0,T)\times D)
\qquad\P\text{-a.s.}\,.
\]

Let us show that the measurability of $u$ and $\xi$.
From the strong convergence
\eqref{conv4} we know that $u_\lambda\to u$ strongly in $L^2(0,T; H)$ $\P$-almost surely,
which implies for a subsequence the convergence
$\P\otimes dt$-almost everywhere to the same limit. 
Since $u_\lambda$ is adapted with continuous trajectories, this ensures that $u$ is predictable in $H$.
Let us focus now on $\xi$: since we only have a weak convergence, a different argument is needed.
First, we set
$\xi_\lambda:=\beta_\lambda(u_\lambda)$ and
define for any $h \in L^\infty((0,T) \times D)$
\[
F_\lambda:= \int_0^T\!\!\int_D (\xi_\lambda-(\xi_\lambda)_D) h \,,\qquad 
F:= \int_0^T\!\! \int_D (\xi-\xi_D) h\,.
\]
Since $\xi_\lambda-(\xi_\lambda)_D\wto\xi-\xi_D$ 
in $L^1((0,T)\times D)$, we have that
$F_\lambda\to F$  $\P$-almost surely in $\Omega$.
Let us prove that actually $F_\lambda\wto F$ in $L^1(\Omega)$.
For any $\ell\in L^\infty(\Omega)$, setting $j_0(\cdot):= j^*(\frac\cdot{2M})$ and 
$M= 1/ [(\norm h_{L^\infty((0,T) \times D)} \vee 1)(\norm\ell_{L^\infty( \Omega)} \vee 1)]$,
by Jensen's inequality we get 
\begin{equation*}
\E j_0(F_\lambda \ell) = \E j_0 \left( \int_0^T\!\!\int_D (\xi_\lambda -(\xi_\lambda)_D)h\ell\right)
\lesssim \E \int_0^T\!\!\int_D j^*(\xi_\lambda)\, dz ds\,,
\end{equation*}
where the last term is bounded thanks to \eqref{est2}. Since $j_0$ is superlinear, by the de la Vall\'ee Poussin criterion 
the sequence $F_\lambda \ell$ is uniformly integrable in $\Omega$, which implies the strong convergence
$F_\lambda \ell\to F \ell$ in $L^1(\Omega)$ thanks to Vitali's theorem. Since $\ell$ is arbitrary,
we get the weak convergence in $L^1(\Omega)$ of $(F_\lambda)_\lambda$: hence, we have that 
$\xi_\lambda-(\xi_\lambda)_D\wto \xi-\xi_D$  
in $L^1(\Omega \times (0,T) \times D)$. 
Taking into account that $\{\xi_\lambda\}_\lambda$ is weakly relatively compact in $L^1(\Omega\times(0,T)\times D)$
by the estimate \eqref{est2}, it is not restrictive to assume that 
$\xi_\lambda \wto \xi$ in $L^1(\Omega\times(0,T)\times D)$.
By Mazur's lemma, there is a subsequence (independent of $\omega$)
made up of convex combinations
of $\{\xi_\lambda\}_\lambda$ 
which converge strongly to $\xi$ in $L^1(\Omega\times(0,T)\times D)$.
Since $\xi_\lambda$ are predictable 
(as finite convex combination of predictable processes), the limit $\xi$ is a predictable $L^1(D)$-valued process.
Similarly, one can show that $u$ is also a measurable $V_2$-valued process.

Finally, we prove some integrability properties for the limit processes $(u,\xi)$. 
By the weak lower semicontinuity of the norms and the estimates \eqref{est1}--\eqref{est2} we have
\begin{align}
  \label{integ1}
  \E\norm{u}_{L^\infty(0,T; H)}^2 &\leq \liminf_{\lambda\searrow0}\E\norm{u_\lambda}^2_{L^\infty(0,T; H)}\leq M\,,\\
  \label{integ2}
  \E\norm{u}_{L^2(0,T; V_2)}^2 &\leq \liminf_{\lambda\searrow0}\E\norm{u_\lambda}^2_{L^2(0,T; V_2)}\leq M\,,\\
  \label{integ3}
  \E\norm{\xi}_{L^1((0,T)\times D)} &\leq\liminf_{\lambda\searrow0}\E\norm{\beta_\lambda(u_\lambda)}_{L^1(0,T)\times D}\leq M\,,
\end{align}
while the weak lower semicontinuity of the convex integrals yields
\beq\label{integ4}
\begin{split}
  \E\int_0^T\!\!\int_D\left(j(u)+j^*(\xi)\right)&\leq \liminf_{\lambda\searrow0}\E\int_0^T\!\!\int_D\left(j((I+\lambda\beta)^{-1}u_\lambda)
  +j^*(\beta_\lambda(u_\lambda))\right)\\
  &=\E\int_0^T\!\!\int_D(I+\lambda\beta)^{-1}u_\lambda\beta_\lambda(u_\lambda)\leq
  \E\int_0^T\!\!\int_D\beta_\lambda(u_\lambda)u_\lambda\leq M\,.
\end{split}\eeq
This concludes the proof of Theorem~\ref{thm1} under the additional assumption \eqref{strong_B}.

\subsection{Continuous dependence}
\label{cont_dep}
We prove here the continuous dependence result contained in Theorem~\ref{thm2},
which will be needed in order to remove the hypothesis 
\eqref{strong_B} in the next section.

In the notation and assumptions of Theorem~\ref{thm2}, 
setting by convenience $u:=u_1-u_2$, $\xi:=\xi_1-\xi_2$, $P:=\pi(u_1)-\pi(u_2)$, $u_0:=u_0^1-u_0^2$,
$g:=g_1-g_2$ and $B:=B_1-B_2$ we have
\[
  u(t) + \int_0^t\Delta^2u(s)\,ds - \int_0^t\Delta\left(\xi(s)+P(s)\right)\,ds
  =u_0 +\int_0^t\Delta g(s)\,ds + \int_0^tB(s)\,dW_s
\]
for every $t\in[0,T]$, $\P$-almost surely.
Using a superscript $\delta$ to denote the action of the resolvent $(I-\delta\Delta)^{-k}$ for any $\delta\in(0,1)$, as 
in the previous section,
and recalling that $-\Delta$, $\Delta^2$ and $(I-\delta\Delta)^{-k}$ commute, we have
\[
  u^\delta(t) + \int_0^t\Delta^2u^\delta(s)\,ds - \int_0^t\Delta\left(\xi^\delta(s)+P^\delta(s)\right)\,ds
  =u^\delta_0 +\int_0^t\Delta g^\delta(s)\,ds+ \int_0^tB^\delta(s)\,dW_s
\]
for every $t\in[0,T]$, $\P$-almost surely.
Now, the hypothesis on the mean of $(u_0^i)_D+(B_i\cdot W)_D$ in Theorem~\ref{thm2}
implies that $u_D=0$, hence also 
$(u^\delta)_D=0$. Consequently, 
It\^o's formula for the function $\Phi(u^\delta)$ yields
\[
  \begin{split}
  \frac12\norm{u^\delta(t)}_*^2
  &+\int_0^t\norm{\nabla u^\delta(s)}_H^2\,ds
  +\int_0^t\!\!\int_D\xi^\delta(s)u^\delta(s)\,ds\\
  &= \frac12\norm{u^\delta_0}_*^2 
  - \int_0^t\!\!\int_DP^\delta(s)u^\delta(s)\,ds
  - \int_0^t\!\!\int_Dg^\delta(s)u^\delta(s)\,ds\\
  &\quad+\frac12\int_0^t\operatorname{Tr}\left[(B^\delta(s))^*\mathcal N(\cdot-(\cdot)_D)B^\delta(s)\right]\,ds
  +\int_0^t\left(\mathcal Nu_\delta(s), B^\delta(s)\right)_H\,dW_s
  \end{split}
\]
for every $t\in[0,T]$, $\P$-almost surely. Proceeding now as in Subsection~\ref{second_est},
if $\{e_k\}_{k\in\enne}$ is an orthonormal system of $U$, 
we have
\[\begin{split}
\operatorname{Tr}\left[(B^\delta)^*\mathcal N(\cdot-(\cdot)_D)B^\delta\right]&=
\sum_{k=0}^\infty\left(\mathcal N(B^\delta e_k-(B^\delta e_k)_D), B^\delta e_k\right)_H\\
&=\sum_{k=0}^\infty\norm{\nabla\mathcal N(B^\delta e_k-(B^\delta e_k)_D)}^2_H
\leq\sum_{k=0}^\infty\norm{B^\delta e_k}^2_{*}\lesssim\norm{B}^2_{\cL_2(U,V_1^*)}
\end{split}\]
and
\[\begin{split}
\E&\sup_{t\in[0,T]}\abs{\int_0^t\left(\mathcal Nu^\delta(s), B^\delta(s)\right)_H\,dW_s}
\lesssim
\E\left(\int_0^T\norm{\mathcal Nu^\delta(s)}_{V_1}^2\norm{B^\delta(s)}_{\cL_2(U,V_1^*)}^2\,ds\right)^{1/2}\\
&\lesssim\eps\E\sup_{t\in[0,T]}\norm{\nabla\mathcal Nu^\delta(t)}_H^2 + 
\frac1{4\eps}\norm{B}^2_{L^2(\Omega; L^2(0,T; \cL_2(U,V_1^*)))}
\end{split}\]
for every $\eps>0$.
Choosing $\eps$ small enough, rearranging the terms, taking supremum in time and expectations
and using the Lipschitz continuity of $\pi$ together with the Young inequality and \eqref{N2} we have
\[\begin{split}
  &\norm{u^\delta}^2_{L^2(\Omega; L^\infty(0,t; V_1^*))} + \norm{\nabla u^\delta}^2_{L^2(\Omega; L^2(0,T; H))}
  +\E\int_0^T\!\!\int_D\xi^\delta u^\delta\\
  &\lesssim\norm{u_0}^2_{L^2(\Omega; V_1^*)} + \norm{B}^2_{L^2(\Omega; L^2(0,T; \cL_2(U,V_1^*)))}\\
  &\quad+C_\pi\E\int_0^t\norm{u(s)}_H\norm{u^\delta(s)}_H\,ds + \E\int_0^t\norm{g^\delta(s)}_*\norm{u^\delta(s)}_1\,ds\\
  &\lesssim\norm{u_0}^2_{L^2(\Omega; V_1^*)} + \norm{B}^2_{L^2(\Omega; L^2(0,T; \cL_2(U,V_1^*)))}
  +\norm{g}^2_{L^2(\Omega; L^2(0,T; V_1^*))}\\
  &\quad+\int_0^t\norm{u(s)}_H^2\,ds+\eta\E\int_0^T\norm{\nabla u^\delta(s)}^2_H\,ds + 
  C_\eta\int_0^t\norm{u^\delta}_{L^2(\Omega; L^\infty(0,s;V_1^*))}\,ds
\end{split}\]
for every $\eta>0$ and $t\in[0,T]$. Hence, the Gronwall lemma yields
\[\begin{split}
  &\norm{u^\delta}^2_{L^2(\Omega; L^\infty(0,t; V_1^*))} + \norm{\nabla u^\delta}^2_{L^2(\Omega; L^2(0,T; H))}
  +\E\int_0^t\!\!\int_D\xi^\delta u^\delta\\
  &\lesssim\norm{u_0}^2_{L^2(\Omega; V_1^*)}+\norm{g}^2_{L^2(\Omega; L^2(0,T; V_1^*))}+ 
  \norm{B}^2_{L^2(\Omega; L^2(0,T; \cL_2(U,V_1^*)))}+\int_0^t\norm{u(s)}_H^2\,ds
\end{split}\]
for every $t\in[0,T]$.
We need to let $\delta\searrow0$. To this end, note that
if $k=2$ then from the definition of $(I-\delta)^{-k}$ we have 
$u^\delta-2\delta\Delta u^\delta+\delta^2\Delta^2u^\delta=u$:
testing by $u^\delta$ it easily follows from the Young inequality that 
\[
  \frac12\norm{u^\delta}^2_H +2\delta\norm{\nabla u^\delta}_H^2 + \delta^2\norm{\Delta u^\delta}_H^2\leq \frac12\norm{u}_H^2
  \qquad\forall\,\delta\in(0,1)\,,
\]
while testing by $-\Delta u^\delta$ we have
\[
  \norm{\nabla u^\delta}_H^2 + 2\delta\norm{\Delta u^\delta}_H^2 + \delta^2\norm{\nabla\Delta u^\delta}_H^2
  =\int_Du(-\Delta u^\delta)\leq \delta\norm{\Delta u^\delta}_H^2 + \frac1{4\delta}\norm{u}_H^2\,,
\]
so that, in particular, we have $2\delta\norm{\nabla u^\delta}_H^2\leq\frac12\norm{u}_H^2$ and
$\delta^3\norm{\nabla\Delta u^\delta}_H^2\leq \frac14\norm{u}_H^2$.
Hence, bearing in mind these considerations, we have that 
\[\begin{split}
  \norm{u-u^\delta}_*^2&=\norm{\nabla\mathcal N(u-u^\delta)}^2_H\leq
  8\delta^2\norm{\nabla u^\delta}_H^2+2\delta^4\norm{\nabla\Delta u^\delta}_H^2\\
  &\leq 2\delta\norm{u}_H^2 + \frac\delta2\norm{u}_H^2=\frac52\delta\norm{u}_H^2\,.
\end{split}\]
In general, if $k\geq2$, then the previous inequality still holds since $(I-\delta\Delta)^{-1}$
contracts on $V_1^*$.
Since $u\in L^2(\Omega; L^\infty(0,T; H))$, this ensures that $u^\delta\to u$ in $L^2(\Omega; L^\infty(0,T; V_1^*))$\,.
Moreover, since $u\in L^2(\Omega; L^2(0,T; V_2))$, 
it is clear that $\nabla u^\delta\to\nabla u$ in $L^2(\Omega; L^2(0,T; H))$.
Finally, proceeding exactly as in the previous subsection, one can show that the family $\{\xi^\delta u^\delta\}_\delta$
is uniformly integrable on $\Omega\times(0,T)\times D$, hence by Vitali's convergence theorem 
that $\xi^\delta u^\delta\to\xi u$ in $L^1(\Omega\times(0,T)\times D)$.
Letting then $\delta\searrow0$ with this information, by \eqref{N2} we have
\[\begin{split}
  \norm{u}^2_{L^2(\Omega; L^\infty(0,t; V_1^*))} &+ \norm{\nabla u}^2_{L^2(\Omega; L^2(0,T; H))}
  +\E\int_0^T\!\!\int_D\xi u\\
  &\lesssim\norm{u_0}^2_{L^2(\Omega; V_1^*)}
  +\norm{g}^2_{L^2(\Omega; L^2(0,T; V_1^*))}+ 
  \norm{B}^2_{L^2(\Omega; L^2(0,T; \cL_2(U,V_1^*)))}\\
  &+\eta\int_0^T\norm{\nabla u(s)}_H^2+
  C_\eta\int_0^t\norm{\nabla\mathcal{N}u(s)}_H^2\,ds
\end{split}\]
for every $\eta>0$ and a certain $C_\eta>0$; hence, 
the inequality of Theorem~\ref{thm2} is proved choosing $\eta$ small enough
using the Gronwall lemma and the monotonicity of $\beta$.

Moreover, it is clear from the continuous dependence that if $u_0^1=u_0^2$, $g_1=g_2$ and $B_1=B_2$, 
then $u_1=u_2$, hence also by substitution in the equation $-\Delta(\xi_1-\xi_2)=0$: this implies that $\xi_1-\xi_2$
is constant on $D$, hence necessarily equal to $(\xi_1-\xi_2)_D$.
Finally, if $\beta$ is also single-valued, then $\xi_1=\xi_2$ follows directly from the fact that $u_1=u_2$
and $\xi_i\in\beta(u_i)$ for $i=1,2$. This completes the proof of Theorem~\ref{thm2}.

\subsection{Conclusion of the proof}
Our goal is now to remove the assumption \eqref{strong_B} and to conclude
the proof of Theorem~\ref{thm1}.

Let $B \in L^2(\Omega; L^2(0,T; \cL_2(U,H)))$ and consider, for any $\eps\in(0,1)$, the operator
\[
  B^\eps:=(I-\eps\Delta)^{-2}B \in L^2(\Omega; L^2(0,T; \cL_2(U, V_4)))\,.
\]
Since $N\in\{2,3\}$, by the Sobolev embeddings we have that $V_4\embed W^{2,\infty}(D)\cap V_2$,
hence we can apply the results already proved in the previous sections and infer that the problem
\eqref{eq1}--\eqref{eq4} with additive noise $B^\eps$ and initial datum $u_0$ admits a unique strong solution $(u^\eps, \xi^\eps)$
for every $\eps\in(0,1)$.

Now, taking into account the inequalities \eqref{integ1}--\eqref{integ4} and recalling how
the constant $M$ was chosen in the estimates \eqref{est1}--\eqref{est2},
we deduce that 
\[\begin{split}
  &\norm{u^\eps}^2_{L^2(\Omega; L^\infty(0,T; H))} + \norm{u^\eps}^2_{L^2(\Omega; L^2(0,T; V_2))}
  +\norm{j(u^\eps)}_{L^1(\Omega\times(0,T)\times D)}
  +\norm{j^*(\xi^\eps)}_{L^1(\Omega\times(0,T)\times D)}\\
  &\lesssim 1+ \norm{u_0}^2_{L^2(\Omega; H)}
  +\norm{B^\eps}^2_{L^2(\Omega; L^2(0,T; \cL_2(U,H)))} + \norm{g}^2_{L^2(\Omega; L^2(0,T; H))}+
  \norm{j(\alpha m^\eps)}_{L^1(\Omega\times(0,T))}\,,
\end{split}\]
for a sufficiently large $\alpha >0$ fixed (independent of $\eps$), where $m^\eps:=(u_0)_D + (B^\eps\cdot W)_D$.
By the contraction properties of the resolvent and the Jensen inequality for positive operators
(see \cite{haase}) we have
\[
  \norm{B^\eps}^2_{L^2(\Omega; L^2(0,T; \cL_2(U,H)))}
  \leq\norm{B}^2_{L^2(\Omega; L^2(0,T; \cL_2(U,H)))}
\]
and
\[\begin{split}
  \norm{j(\alpha m^\eps)}_{L^1(\Omega\times(0,T))}&\leq \frac12\norm{j(2\alpha(u_0)_D)}_{L^1(\Omega\times(0,T))} 
  + \frac12\norm{j(2\alpha(B^\eps\cdot W)_D)}_{L^1(\Omega\times(0,T))}\\
  &\leq\frac T2\norm{j(2\alpha(u_0)_D)}_{L^1(\Omega)} + \frac12\norm{j(2\alpha(B\cdot W)_D)}_{L^1(\Omega\times(0,T))} 
\end{split}\]
for every $\eps\in(0,1)$, where the terms on the right-hand sides 
are finite by the assumptions \eqref{u0} and \eqref{add1}--\eqref{add2}. 
We deduce that there is a positive constant $M$, independent of $\eps$, such that 
the following estimate holds for every $\eps\in(0,1)$:
\[
  \norm{u^\eps}^2_{L^2(\Omega; L^\infty(0,T; H))} + \norm{u^\eps}^2_{L^2(\Omega; L^2(0,T; V_2))}
  +\norm{j(u^\eps)}_{L^1(\Omega\times(0,T)\times D)}
  +\norm{j^*(\xi^\eps)}_{L^1(\Omega\times(0,T)\times D)}\leq M\,.
\]

Furthermore, by Theorem~\ref{thm2}
we have also that 
\[
  \norm{u^\eps-u^\delta}^2_{L^2(\Omega; C^0([0,T]; V_1^*))}
  +\norm{\nabla(u^\eps-u^\delta)}^2_{L^2(\Omega; L^2(0,T; H))}\lesssim
  \norm{B^\eps-B^\delta}^2_{L^2(\Omega; L^2(0,T; \cL_2(U,V_1^*)))}
\]
for every $\eps,\delta\in(0,1)$. Moreover, 
noting that $(u^\eps-u^\delta)_D=((B^\eps-B^\delta)\cdot W)_D$,
by the inequality \eqref{N2} and the Burkholder-Davis-Gundy inequality we have that 
\[\begin{split}
  &\norm{u^\eps-u^\delta}^2_{L^2(\Omega; L^2(0,T; H))}\\
  &\lesssim
  \norm{(u^\eps-u^\delta)-(u^\eps-u^\delta)_D}^2_{L^2(\Omega; L^2(0,T; H))}+
  \norm{((B^\eps-B^\delta)\cdot W)_D}^2_{L^2(\Omega; L^2(0,T; H))}\\
  &\lesssim\norm{\nabla((u^\eps-u^\delta)-(u^\eps-u^\delta)_D)}^2_{L^2(\Omega; L^2(0,T; H))}
  +\norm{(u^\eps-u^\delta)-(u^\eps-u^\delta)_D}^2_{L^2(\Omega; L^2(0,T; V_1^*))}\\
  &\qquad\qquad+\norm{B^\eps-B^\delta}^2_{L^2(\Omega; L^2(0,T; \cL_2(U,H)))}\\
  &\lesssim\norm{\nabla(u^\eps-u^\delta)}^2_{L^2(\Omega; L^2(0,T; H))}+
  \norm{u^\eps-u^\delta}^2_{L^2(\Omega; L^2(0,T; V_1^*))}+\norm{B^\eps-B^\delta}^2_{L^2(\Omega; L^2(0,T; \cL_2(U,H)))}
\end{split}\]
from which infer then that 
\[
  \norm{u^\eps-u^\delta}^2_{L^2(\Omega; L^2(0,T; H))}\lesssim
  \norm{B^\eps-B^\delta}^2_{L^2(\Omega; L^2(0,T; \cL_2(U,H)))} \qquad\forall\,\eps,\delta\in(0,1)\,.
\]
Since $B^\eps\to B$ in $L^2(\Omega; L^2(0,T; \cL_2(U,H)))$ as $\eps\searrow0$
this implies that $(u^\eps)_\eps$ is Cauchy in the space
$L^2(\Omega; L^2(0,T; V_1))\cap L^2(\Omega; C^0([0,T]; V_1^*))$.

Taking into account these consideration, recalling that $j^*$ is superlinear, 
by the de la Vall\'ee Poussin and Dunford-Pettis theorems we deduce that there is a pair $(u,\xi)$ such that,
along a subsequence (that we still denote by $\eps$ for simplicity),
\begin{align*}
  u^\eps \to u \qquad&\text{in } L^2(\Omega; L^2(0,T; V_1))\cap L^2(\Omega; C^0([0,T]; V_1^*))\,,\\
  u^\eps \wto u \qquad&\text{in } L^2(\Omega; L^2(0,T; V_2))\,,\\
  \xi^\eps \wto \xi \qquad&\text{in } L^1(\Omega\times(0,T)\times D)\,.
\end{align*}
Using the lower semicontinuity of the norms and the fact that $(u^\eps)_\eps$
and $j(u^\eps)+j^*(\xi^\eps)$ are bounded in
$L^2(\Omega; L^\infty(0,T; H))$ and $L^1(\Omega\times(0,T)\times D)$, respectively,
it is a standard matter to check that 
\[
  u \in L^2(\Omega; L^\infty(0,T; H))\,, \qquad j(u)+j^*(\xi) \in L^1(\Omega\times(0,T)\times D)\,.
\]
Furthermore, the strong convergence of $u^\eps$ to $u$ allows to apply the 
generalized result on the strong-weak closure of maximal monotone graphs contained in 
\cite[Thm.~18, p.~126]{brezis-monot} and to infer that 
\[
  \xi \in \beta(u) \qquad\text{a.e.~in } \Omega\times(0,T)\times D\,.
\]
Finally, using the convergences of $(u^\eps)_\eps$ and $(\xi^\eps)_\eps$ it is not difficult to 
pass to the limit as $\eps\searrow0$ is the integral equation satisfies by $(u^\eps,\xi^\eps)$
and conclude that $(u,\xi)$ is thus a strong solution to the original problem.
The proof of Theorem~\ref{thm1} is concluded.

%%%%%%%%%%%%%%%%%%%%%%%%%%%%%%%%%%%%%%%%%%%%%%%%%%%%

\section{Well-posedness with multiplicative noise}
\setcounter{equation}{0}
\label{mult}

In this section, we present the proof of Theorems~\ref{thm3}--\ref{thm4}.
The existence of a strong solution in the case of multiplicative noise is shown using a fixed point argument on 
subintervals of $[0,T]$ and through a classical iterative technique, while 
the continuous dependence follows from a natural procedure using the Lipschitzianity of $B$.

\subsection{Proof of existence}
Let $u_0$, $g$ and $B$ satisfy the assumptions \eqref{g}--\eqref{u0} and \eqref{mult1}--\eqref{mult3} of Theorem~\ref{thm3}.

For any progressively measurable process 
$v\in L^2(\Omega; L^2(0,T; V_1))$, let us show that the operator $B(\cdot,\cdot, v)$ and 
the initial datum $u_0$ satisfy the assumptions
of Theorem~\ref{thm1}. Indeed, since $B$ takes values in $\cL_2(U,H_0)$, 
it is clear that $(B(\cdot,\cdot,v)\cdot W)_D=0$, so that 
the integrability condition \eqref{add2} is easily satisfied. Moreover, by \eqref{mult3}, we have that
\[
  \norm{B(\cdot,\cdot,v)}_{\cL_2(U,H)}\leq |f| + C_B\norm{v}_{V_1} \in L^2(\Omega\times(0,T))\,,
\]
so that also \eqref{add1} is holds true. Hence, Theorems~\ref{thm1}--\ref{thm2} ensure the existence of a
strong solution $(u,\xi)$ such that
\[
  u(t) + \int_0^t\Delta^2 u(s)\,ds - \int_0^t\Delta\left(\xi(s) + \pi(u(s) + g(s))\right)\,ds
  =u_0 + \int_0^t B(s,v(s))\,dW_s
\]
for every $t\in[0,T]$, $\P$-almost surely, where the processes $u$ and $\xi-\xi_D$
are uniquely determined. We deduce that it is well-defined the map
\[
  \Lambda: L^2(\Omega; L^2(0,T; V_1)) \to L^2(\Omega; C^0([0,T]; V_1^*))\cap L^2(\Omega; L^\infty(0,T; H))\cap 
  L^2(\Omega; L^2(0,T; V_2))\,,
\]
where $\Lambda v$ is defined as the unique process $u$ solving the equation above 
(with a certain $\xi$).

We show that $\Lambda$ admits a unique fixed point. To this end, let $v_i\in L^2(\Omega; L^2(0,T; V_1))$
progressively measurable and $u_i:=\Lambda v_i$ solving the equation above for certain
$\xi_i$, for $i=1,2$: the continuous dependence property 
contained in Theorem~\ref{thm2} and the Lipschitz continuity of $B$ implies that 
\[\begin{split}
  \norm{u_1-u_2}_{L^2(\Omega; C^0([0,T]; V_1^*))} &+ \norm{\nabla(u_1-u_2)}_{L^2(\Omega; L^2(0,T; H))}\\
  &\lesssim\norm{B(v_1)-B(v_2)}_{L^2(\Omega; L^2(0,T; \cL_2(U,V_1^*)))}\\
  &\leq C_B\norm{v_1-v_2}_{L^2(\Omega; L^2(0,T; V_1^*))}\,.
\end{split}\]
Since $(u_1-u_2)_D=0$, this ensures that $\Lambda$ can be uniquely extended by continuity to
\[
  \Lambda': L^2(\Omega; L^2(0,T; V_1^*)) \to L^2(\Omega; C^0([0,T]; V_1^*))\cap L^2(\Omega; L^2(0,T; V_1))
\]
and
\[
  \norm{u_1-u_2}_{L^2(\Omega; L^2(0,T; V_1^*))}\lesssim
  C_B\sqrt{T}\norm{v_1-v_2}_{L^2(\Omega; L^2(0,T; V_1^*))}\,.
\]
Consequently, choosing $T_0>0$ sufficiently small, we have that $\Lambda'$ is a contraction
on $L^2(\Omega; L^2(0,T_0;V_1^*))$, and admits hence a unique fixed point $u$, which satisfies $\Lambda' u=u$.
Since also $u=\Lambda'u \in L^2(\Omega; L^2(0,T; V_1))$ by definition of $\Lambda'$,
we have that $u=\Lambda u$ with a corresponding $\xi$. 
It is clear that $(u,\xi)$ is a strong solution to \eqref{eq1}--\eqref{eq4} on $(0,T_0)$. 
A strong solution on the whole interval $[0,T]$ is obtained
using a standard technique by patching together all the local solution on $[0,T_0]$, $[T_0, 2T_0]$, $\ldots$ until $T$.

\subsection{Proof of continuous dependence}
Let $(u_0^i, g_i)$ and $B$ satisfy the assumptions \eqref{g}--\eqref{u0} 
and \eqref{mult1}--\eqref{mult3} of Theorem~\ref{thm4} and
$(u_i,\xi_i)$ be any respective solutions, for $i=1,2$.
By the continuous dependence property in Theorem~\ref{thm2} and the Lipschitz continuity of $B$, it follows that 
\[\begin{split}
  &\norm{u_1-u_2}_{L^2(\Omega; C^0([0,T]; V_1^*))} + \norm{\nabla(u_1-u_2)}_{L^2(\Omega; L^2(0,T; H))}\\
  &\lesssim \norm{u_0^1 - u_0^2}_{L^2(\Omega; V_1^*)} +\norm{g_1-g_2}_{L^2(\Omega; L^2(0,T; V_1^*))}
  + \norm{B(u_1)-B(u_2)}_{L^2(\Omega; L^2(0,T; \cL_2(U,V_1^*)))}\\
  &\lesssim \norm{u_0^1 - u_0^2}_{L^2(\Omega; V_1^*)} +\norm{g_1-g_2}_{L^2(\Omega; L^2(0,T; V_1^*))}
  + C_B\norm{u_1-u_2}_{L^2(\Omega; L^2(0,T; V_1^*))}\,.
  %&\leq \norm{u_0^1 - u_0^2}_{L^2(\Omega; V_1^*)} +\norm{g_1-g_2}_{L^2(\Omega; L^2(0,T; V_1^*))}
  %+ C_B\sqrt{T}\norm{u_1-u_2}_{L^2(\Omega; L^\infty(0,T; V_1^*))}\,.
\end{split}\]
Since $T$ is arbitrary, the continuous dependence is implied by the Gronwall lemma,
%This ensures again that if we choose $T_0$ small enough and we rearrange the terms in the last inequality, we get 
%exactly the continuous dependence result on $[0,T_0]$. Proceeding as before, it is a standard matter to
%iterate this procedure on $[T_0, 2T_0]$, $\ldots$
%until $T$ and recover the continuous dependence result also on the whole $[0,T]$: indeed, this is possible 
%since the implicit constants appearing in the inequality and the choice of $T_0$ are independent of the specific subinterval
%that we consider. 
while the uniqueness follows directly
as in the proof of Theorem~\ref{thm2}.

%%%%%%%%%%%%%%%%%%%%%%%%%%%%%%%%%%%%%%%%%%%%%%%%%%%%%

\section{Regularity}
\setcounter{equation}{0}
\label{reg}
This section is devoted to proving the regularity result contained in Theorem~\ref{thm5}.

We use exactly the same approximation on the problem as described
in Subsection~\ref{approximation}, but we show that with the additional assumptions the approximated solutions
satisfy a further estimate involving the Ginzburg-Landau functional of the system.
Using the superscript $\delta$ to denote the action of the operator $(I-\delta\Delta)^{-2}$, as usual, 
and setting $w_\lambda:=-\Delta u_\lambda + \beta_\lambda(u_\lambda) + \pi(u_\lambda) + g$, if we
apply $(I-\delta\Delta)^{-2}$ to 
\eqref{app_eq} we get
\[\begin{split}
  u_\lambda^\delta(t) - \int_0^t\Delta w_\lambda^\delta(s)\,ds 
  =u_0^\delta + \int_0^tB^\delta(s)\,dW_s \qquad\text{in } V_1\,, \quad\forall\,t\in[0,T]\,, \quad\P\text{-a.s.}
\end{split}\]
Note that thanks to the regularities of $u_\lambda$ and $w_\lambda$, 
all the integrands in the previous equation are well-defined in the space $V_1$. Hence, we can apply the classical It\^o's
formula to the function
\[
  F_\lambda:V_1\to[0,+\infty)\,, \qquad F_\lambda(x):=\frac12\norm{\nabla x}_H^2 + 
  \int_Dj_\lambda(x) + \int_D\widehat\pi(x)\,, \quad x\in V_1\,,
\]
where $\widehat\pi(r):=\int_0^r\pi(s)\,ds$, $r\in\erre$.
We shall assume with no restriction from now on that $\beta_\lambda, \pi\in C^2(\erre)$ and have 
bounded first and second order derivatives: this is typically not the case,
because $\beta_\lambda$ and $\pi$ are just Lipschitz continuous on $\erre$.
To be precise, one should introduce a further approximation on $\beta_\lambda$ and $\pi$
in order to recover such regularity: for example, we could work with $\beta_{\lambda, i}$ and $\pi_i$,
where the subscript $i\in\enne$ denotes the convolution with a suitable mollifier $\rho_i:\erre\to\erre$.
This is absolutely not restrictive:
however, in order to avoid heavy notations and to make the treatment lighter, 
we shall simply assume such regularity on $\beta_\lambda$ and $\pi$ in the sequel.
With this further regularity, using the fact that $V_1\embed L^4(D)$, an easy computation shows that
$F_\lambda$ is twice Fr\'echet differentiable with derivatives given by
\begin{gather*}
  DF_\lambda(x)[y]=\sp{-\Delta x + \beta_\lambda(x) + \pi(x)}y_{V_1}=
  \int_D\nabla x\cdot\nabla y + \int_D(\beta_\lambda(x)+\pi(x))y\,,\\
  D_2F_\lambda(x)[y_1,y_2]=
  \int_D\nabla y_1\cdot\nabla y_2 + \int_D\left(\beta_\lambda'(x)+\pi'(x)\right)y_1y_2\,,
\end{gather*}
 for every $x,y, y_1,y_2\in V_1$.
As a consequence, if we set 
$\widetilde w_\lambda^\delta:=-\Delta u_\lambda^\delta + \beta_\lambda(u_\lambda^\delta)+\pi(u_\lambda^\delta)+ g$,
we deduce that
\[
  DF_\lambda(u_\lambda^\delta)[y]=\sp{-\Delta u_\lambda^\delta + \beta_\lambda(u_\lambda^\delta)+\pi(u_\lambda^\delta)}y_{V_1}
  =\sp{\widetilde w_\lambda^\delta-g}y_{V_1}=\int_D(\widetilde w_\lambda^\delta-g)y \quad\forall\,y\in V_1\,.
\]
Bearing in mind these considerations and recalling that 
$g$ is $V_1$-valued by assumption, 
It\^o's formula for $F_\lambda(u_\lambda^\delta)$ (see \cite[Thm.~4.32]{dapratozab})
yields
\[\begin{split}
  \frac12&\norm{\nabla u_\lambda^\delta(t)}_H^2 + \int_Dj_\lambda(u_\lambda^\delta(t)) + \int_D\widehat\pi(u_\lambda^\delta(t))
  + \int_0^t\!\!\int_D\nabla w_\lambda^\delta(s)\cdot\nabla\left(\widetilde w_\lambda^\delta(s)-g(s)\right)\,ds\\
  &=\frac12\norm{\nabla u_0^\delta}_H^2 + \int_Dj_\lambda(u_0^\delta) + \int_D\widehat\pi(u_0^\delta)\\
  &+\frac12\int_0^t\operatorname{Tr}\left[(B^\delta(s))^*D_2F_\lambda(u_\lambda^\delta(s))B^\delta(s)\right]\,ds
  +\int_0^t\left(\widetilde w_\lambda^\delta(s)-g(s), B^\delta(s)\right)_H\,dW_s
\end{split}\]
for every $t\in[0,T]$, $\P$-almost surely, hence also by Young's inequality
\[\begin{split}
  \frac12&\norm{\nabla u_\lambda^\delta(t)}_H^2 + \int_Dj_\lambda(u_\lambda^\delta(t))
  + \int_0^t\norm{\nabla\widetilde w_\lambda^\delta(s)}_H^2\,ds\\
  &\lesssim\norm{\nabla u_0^\delta}_H^2 + \int_Dj_\lambda(u_0^\delta) + \int_D\widehat\pi(u_0^\delta)
  +\norm{\nabla(\widetilde w_\lambda^\delta-w_\lambda^\delta)}_{L^2(0,T; H)}^2
  +\norm{g}^2_{L^2(0,T; V_1)}\\
  &+\frac12\int_0^t\operatorname{Tr}\left[(B^\delta(s))^*D_2F_\lambda(u_\lambda^\delta(s))B^\delta(s)\right]\,ds
  +\int_0^t\left(\widetilde w_\lambda^\delta(s)-g(s), B^\delta(s)\right)_H\,dW_s\,.
\end{split}\]
 Let us estimate the terms on the right-hand side separately.
First of all, by the contraction properties of the resolvent $(I-\delta\Delta)^{-2}$, the abstract Jensen inequality
(see \cite{haase}) and the Lipschitz continuity of $\pi$, we have
\[\begin{split}
  \norm{\nabla u_0^\delta}_H^2 + \int_Dj_\lambda(u_0^\delta) + \int_D\widehat\pi(u_0^\delta)&\lesssim_{C_\pi, C_0}
  1+\norm{\nabla u_0}^2_H+\int_D(I-\delta\Delta)^{-2}j_\lambda(u_0) + \norm{u_0^\delta}_H^2\\
  &\leq 1+\norm{u_0}_{V_1}^2 + \norm{j(u_0)}_{L^1(D)}\,.
\end{split}\]
Secondly, 
if $\{e_k\}_{k\in\enne}$ is an orthonormal system of $U$, we have
\[\begin{split}
  \operatorname{Tr}&\left[(B^\delta)^*D_2F_\lambda(u_\lambda^\delta)B^\delta\right] = 
  \sum_{k=0}^\infty\int_D|\nabla B^\delta e_k|^2 + 
  \sum_{k=0}^\infty\int_D(\beta_\lambda'(u_\lambda^\delta)+\pi'(u_\lambda^\delta))|B^\delta e_k|^2\\
  &\leq\norm{B}^2_{\cL_2(U,V_1)} + C_\pi\norm{B}^2_{\cL_2(U,H)}
  +\sum_{k=0}^\infty\int_D\beta_\lambda'(u_\lambda^\delta)|B^\delta e_k|^2\,.
\end{split}\]
Moreover, 
using the fact that $B$ takes values in $\cL_2(U,H_0)$ and recalling the definition of $\mathcal N$ and $\norm{\cdot}_{*}$,
we estimate the last term using the Burkholder-Davis-Gundy and Young inequalities in the following way,
for any $\eps>0$:
\[\begin{split}
  &\E\sup_{t\in[0,T]}\abs{\int_0^t\left(\widetilde w_\lambda^\delta(s)-g(s), B^\delta(s)\right)_H\,dW_s}\\
  &\lesssim
  \E\sup_{t\in[0,T]}\abs{\int_0^t\left(\widetilde w_\lambda^\delta(s), -\Delta\mathcal NB^\delta(s)\right)_H\,dW_s}+
  \E\sup_{t\in[0,T]}\abs{\int_0^t\left(g(s), B^\delta(s)\right)_H\,dW_s}\\
  &\lesssim\E\left(\int_0^T\norm{\nabla \widetilde w_\lambda^\delta(s)}_H^2\norm{B^\delta(s)}^2_{\cL_2(U,V_1^*)}\,ds\right)^{1/2}
  +\E\left(\int_0^T\norm{g(s)}_{V_1}^2
  \norm{B^\delta(s)}^2_{\cL_2(U,V_1^*)}\,ds\right)^{1/2}\\
  &\lesssim\eps\E\norm{\nabla \widetilde w_\lambda^\delta}^2_{L^2(0,T; H)} + 
  \left(\frac1{4\eps}+1\right)\norm{B}^2_{L^2(\Omega; L^\infty(0,T; \cL_2(U,V_1^*)))}
  +\norm{g}^2_{L^2(\Omega\times(0,T); V_1)}\,.
\end{split}\]
Bearing in mind this computations, taking supremum in time and expectations in It\^o's formula
and choosing $\eps$ sufficiently small we deduce that, for every $t\in[0,T]$,
\[\begin{split}
  \E &\sup_{r\leq t}\norm{\nabla u_\lambda^\delta(r)}_H^2 + \E\sup_{r\leq t}\int_Dj_\lambda(u_\lambda^\delta(r))
  +\E\norm{\nabla\widetilde w_\lambda^\delta}_{L^2(0,t;H)}^2\\
  &\lesssim1+\norm{u_0}_{L^2(\Omega; V_1)}^2 + \norm{j(u_0)}_{L^1(\Omega\times D)}
  +\E\norm{\nabla(\widetilde w_\lambda^\delta-w_\lambda^\delta)}_{L^2(0,T; H)}^2
  +\norm{g}^2_{L^2(\Omega; L^2(0,T; V_1))}\\
  &\quad+
  \norm{B}^2_{L^2(\Omega; L^2(0,T; \cL_2(U,V_1))\cap L^\infty(0,T; \cL_2(U,V_1^*)))}
  +\E\int_0^t\sum_{k=0}^{\infty}\int_D\beta_\lambda'(u_\lambda^\delta(s))|B^\delta(s)e_k|^2\,.
\end{split}\]
Now, we handle the last term on the right-hand side under the three different
hypotheses \eqref{ip_j_pol}--\eqref{ip_j3}.
Let us assume first condition \eqref{ip_j_pol}, so that we have $B=B_1+B_2$ with 
$B_1\in L^\infty(\Omega\times(0,T); \cL_2(U,H))$
and $B_2 \in L^2(0,T; L^\infty(\Omega; \cL_2(U,V_1)))$. Taking into account that 
$V_2\embed L^\infty(D)$, we have
\[\begin{split}
  \E\int_0^T\sum_{k=0}^\infty\int_D\beta_\lambda'(u_\lambda^\delta)|B_1^\delta e_k|^2&\lesssim
  \E\int_0^T\left(\norm{B_1(s)}^2_{\cL_2(U,H)}+
  \norm{u_\lambda^\delta(s)}^2_{L^{\infty}(D)}\norm{B_1^\delta(s)}^2_{\cL_2(U,H)}\right)\,ds\\
  &\lesssim\left(1+\norm{u_\lambda}^2_{L^2(\Omega; L^2(0,T;V_2))}\right)\norm{B_1}^2_{ L^\infty(\Omega\times(0,T); \cL_2(U,H))}
  \leq M
\end{split}\]
by \eqref{est1}, while thanks to
the H\"older inequality and the fact that
$V_1\embed L^6(D)$
\[\begin{split}
  \E&\int_0^t\sum_{k=0}^\infty\int_D\beta_\lambda'(u_\lambda^\delta(s))|B_2^\delta(s) e_k|^2\,ds\\
  &\leq R\norm{B_2}^2_{L^2(\Omega\times(0,T);\cL_2(U,H))}+
  R\E\int_0^t\sum_{k=0}^\infty\norm{|u_\lambda^\delta(s)|^2}_{L^{3/2}(D)}\norm{|B_2^\delta(s) e_k|^2}_{L^3(D)}\,ds\\
  &\lesssim
  \norm{B_2^\delta}^2_{L^2(\Omega\times(0,T);\cL_2(U,H))} 
  + \E\int_0^t\norm{u_\lambda^\delta(s)}^2_{L^3(D)}\sum_{k=0}^\infty\norm{B_2^\delta(s) e_k}^2_{L^6(D)}\,ds\\
  &\lesssim \norm{B_2}^2_{L^2(\Omega\times(0,T);\cL_2(U,H))} 
   +	\int_0^t\norm{B_2(s)}^2_{L^\infty(\Omega; \cL_2(U,V_1))}\E\sup_{r\leq s}\norm{u_\lambda}^2_{V_1}\,ds\,.
\end{split}\]
Hence,
noting that $\norm{u_\lambda^\delta}_1^2=|(u_0)_D|^2 + \norm{\nabla u_\lambda^\delta}_{H}^2$
(because $B$ takes values in $\cL_2(U,H_0)$),
we infer that
\[\begin{split}
  &\E \sup_{r\leq t}\norm{\nabla u_\lambda^\delta(r)}_H^2 + \E\sup_{r\leq t}\int_Dj_\lambda(u_\lambda^\delta(r))
  +\E\norm{\nabla\widetilde w_\lambda^\delta}_{L^2(0,t;H)}^2\\
  &\lesssim1+\norm{u_0}_{L^2(\Omega; V_1)}^2 + \norm{j(u_0)}_{L^1(\Omega\times D)}
  +\E\norm{\nabla(\widetilde w_\lambda^\delta-w_\lambda^\delta)}_{L^2(0,T; H)}^2
  +\norm{g}^2_{L^2(\Omega; L^2(0,T; V_1))}\\
  &+\norm{B}^2_{L^2(\Omega; L^2(0,T; \cL_2(U,V_1))\cap L^\infty(0,T; \cL_2(U,V_1^*)))}
  +\int_0^t\norm{B_2(s)}^2_{L^\infty(\Omega; \cL_2(U,V_1))}\E\sup_{r\leq s}\norm{\nabla u_\lambda^\delta(r)}_H^2\,ds\,.
\end{split}\]
Otherwise, under the assumption \eqref{ip_j2} or \eqref{ip_j3},
we have the inclusion $V_s\embed L^\infty(D)$ for any $s>1$ or $s>\frac32$, respectively, 
so that by the Jensen inequality and Remark \ref{yosida}
we infer similarly
\[\begin{split}
  \E&\int_0^t\sum_{k=0}^\infty\int_D\beta_\lambda'(u_\lambda^\delta(s))|B^\delta(s) e_k|^2\,ds\leq
  \E\int_0^t\sum_{k=0}^\infty\norm{B^\delta(s) e_k}_{L^\infty(D)}^2\norm{\beta_\lambda'(u_\lambda^\delta(s))}_{L^1(D)}\,ds\\
  &\lesssim \norm{B}_{L^2(\Omega; L^2(0,T;\cL_2(U,V_s)))}^2
  +\int_0^t\norm{B(s)}^2_{L^\infty(\Omega; \cL_2(U,V_s))}\E\sup_{r\leq s}\norm{j_\lambda(u_\lambda^\delta(r))}_{L^1(D)}\,ds\,.
\end{split}\]
Taking these remarks into account, by the Gronwall lemma
we deduce that
\[\begin{split}
  &\E\norm{\nabla u_\lambda^\delta}_{L^\infty(0,T; H)}^2 
  + \E\norm{j_\lambda(u_\lambda^\delta)}_{L^\infty(0,T; L^1(D))}
  +\E\norm{\nabla\widetilde w_\lambda^\delta}^2_{L^2(0,T; H)}\\
  &\lesssim1+ \E\norm{u_0}_{V_1}^2 + \E\norm{j(u_0)}_{L^1(D)}
  + \norm{g}^2_{L^2(\Omega; L^2(0,T; V_1))}
  +\E\norm{\nabla(\widetilde w^\delta_\lambda-w^\delta_\lambda)}^2_{L^2(0,T; H)}
\end{split}\]
for every $\delta,\lambda\in(0,1)$,
where the implicit constant depends only on $C_\pi, C_0, N, B, s$ according to the different
hypotheses \eqref{ip_j_pol}, \eqref{ip_j2} or \eqref{ip_j3} that are in order.
Since
\[
  \widetilde w^\delta_\lambda-w^\delta_\lambda
  =\beta_\lambda(u_\lambda)^\delta - \beta_\lambda(u_\lambda^\delta)
  +\pi(u_\lambda)^\delta - \pi(u_\lambda^\delta)
  +g^\delta - g
\]
and $\beta_\lambda(u_\lambda), \pi(u_\lambda), g \in L^2(\Omega; L^2(0,T; V_1))$
for all $\lambda$, we have that
$\E\norm{\nabla(\widetilde w^\delta_\lambda-w^\delta_\lambda)}^2_{L^2(0,T; H)}$
is uniformly bounded in $\delta$ for any fixed $\lambda$.
We deduce that $u_\lambda\in L^2(\Omega; L^\infty(0,T; V_1))$ 
and $w_\lambda\in L^2(\Omega; L^2(0,T; V_1))$
for every $\lambda$.
Hence, $w_\lambda^\delta\to w_\lambda$
in $L^2(\Omega; L^2(0,T; V_1))$ as $\delta\searrow0$, and the $C^2_b$-regularity of $\beta_\lambda$ and $\pi$
ensures also that $\widetilde w_\lambda^\delta\to w_\lambda$ in $L^2(\Omega; L^2(0,T; V_1))$ as $\delta\searrow0$.
Consequently, 
letting $\delta\searrow0$, we infer that
\beq
  \label{est3}
  \norm{\nabla u_\lambda}^2_{L^2(\Omega; L^\infty(0,T; H))} + 
  \norm{j_\lambda(u_\lambda)}_{L^1(\Omega; L^\infty(0,T; L^1(D)))} + 
  \norm{\nabla w_\lambda}^2_{L^2(\Omega; L^2(0,T; H))}\leq M
\eeq
for every $\lambda\in(0,1]$ and a
positive constant $M$ independent of $\lambda$.

Completing now the proof of existence as in Section~\ref{additive} with this further information
yields the desired regularity result by the lower semicontinuity of the norms. 
Let us just give a sketch of the proof.
Setting $w:=-\Delta u + \xi + \pi(u) + g$,
from the definition of $w_\lambda$, condition \eqref{est3} and the relatively weak compactness of $\beta_\lambda(u_\lambda)$
in $L^1$, we deduce that
\begin{gather*}
  w\in L^1(\Omega; L^1(0,T; L^1(D)))\,, \quad \nabla w \in L^2(\Omega; L^2(0,T; H))\,,\\
  u\in L^2(\Omega; L^\infty(0,T; V_1))\,.
\end{gather*}
Since $u\in L^2(\Omega; L^\infty(0,T; V_1)\cap C^0([0,T]; V_1^*))$ we have that
$u\in L^2(\Omega; C^0([0,T];H))$.
Moreover, recalling that $\nabla w \in L^2(\Omega; L^2(0,T; H))$,
we have $-\Delta w \in L^2(\Omega; L^2(0,T; V_1^*))$, and
since $(-\Delta w)_D=0$,
we infer $w-w_D=\mathcal N(-\Delta w) \in L^2(\Omega; L^2(0,T; V_1))$, hence also $w \in L^2(\Omega; L^2(0,T; V_1))$.
It follows from the definition of $w$ that $\xi \in L^2(\Omega; L^2(0,T; H))$.
As far as the variational formulation is concerned, note that the second equation follows directly 
from the definition of $w$. Furthermore, since
\[
  \int_Du(t)\varphi - \int_0^t\!\!\int_Dw(s)\Delta\varphi\,ds= 
  \int_Du_0\varphi + \int_DB\cdot W(t)\varphi \qquad\forall\,\varphi\in \bar V_2\,,
\]
 we deduce that
\[
  \sp{\partial_t(u-B\cdot W)}\varphi_{V_1}=- \int_D\nabla w\cdot\nabla\varphi \qquad\forall\,\varphi\in V_1\,,
\]
from which 
\[
  u-B\cdot W \in L^2(\Omega; H^1(0,T; V_1^*))
\]
and the variational formulation of the first equation.

%%%%%%%%%%%%%%%%%%%%%%%%%%%%%%%%%%%%%%%%%%%%%%%%%%%%%

\def\cprime{$'$}

\end{document}